\definecolor{cyan}{cmyk}{1,0,0,0}
\newtheorem{Thm}{Theorem}[section]
\newtheorem{Cor}[Thm]{Corollary}
\newtheorem{Prop}[Thm]{Proposition}
\newtheorem{Lem}[Thm]{Lemma}
\newtheorem{customthm}{Theorem}[section]
\newcounter{claim}[Thm]
\theoremstyle{definition}
\theoremstyle{remark}
\newtheorem{remark}[Thm]{Remark}
\crefname{customthm}{Theorem}{Theorems}
\crefname{Cor}{Corollary}{Corollaries}
\crefname{Conj}{Conjecture}{Conjectures}
\crefname{Prop}{Proposition}{Propositions}
\crefname{Qu}{Question}{Questions}
\crefname{Lem}{Lemma}{Lemmas}
\crefname{Def}{Definition}{Definitions}
\crefname{Not}{Notation}{Notation}
\crefname{Ex}{Example}{Examples}
\crefname{Rem}{Remark}{Remarks}
\crefname{remark}{Remark}{Remarks}		
\numberwithin{equation}{section}
\newcommand{\Aut}{\operatorname{Aut}}
\newcommand{\Syl}{\operatorname{Syl}}
\newcommand{\Inn}{\operatorname{Inn}}
\newcommand{\Sym}{\operatorname{Sym}}
\newcommand{\Alt}{\operatorname{Alt}}
\newcommand{\PSU}{\operatorname{PSU}}
\newcommand{\PSL}{\operatorname{PSL}}
\newcommand{\PGL}{\operatorname{PGL}}
\newcommand{\GL}{\operatorname{GL}}
\newcommand{\SL}{\operatorname{SL}}
\newcommand{\SU}{\operatorname{SU}}
\newcommand{\PSp}{\operatorname{PSp}}
\renewcommand{\epsilon}{\varepsilon}
\newcommand{\ov}{\overline}
\def \ov {\overline}
\DeclareMathOperator{\E}{E}
\DeclareMathOperator{\D}{D}
\DeclareMathOperator{\F}{F}
\DeclareMathOperator{\G}{G}
\DeclareMathOperator{\GF}{GF}
\DeclareMathOperator{\J}{J}
\DeclareMathOperator{\PGU}{PGU}
\DeclareMathOperator{\Sz}{{}^2 B_2}
\DeclareMathOperator{\Ree}{{}^2 G_2}
\DeclareMathOperator{\BigRee}{{}^2 F_4}
\DeclareMathOperator{\triality}{{}^3 D_4}	
\DeclarePairedDelimiter{\gen}{\langle}{\rangle}
\DeclarePairedDelimiter{\abs}{\lvert}{\rvert}
\begin{document}

\title {Squares of conjugacy classes and a variant on the Baer--Suzuki Theorem}
 \author{Chris Parker}
\author{Jack Saunders}
\address{Chris Parker\\
School of Mathematics\\
University of Birmingham\\
Edgbaston\\
Birmingham B15 2TT\\
United Kingdom} \email{c.w.parker@bham.ac.uk}
\address{Jack Saunders\\
The University of Western Australia\\ 35 Stirling Highway\\ Perth\\
WA 6009\\
Australia}
\email{jack-saunders@hotmail.co.uk}

\date{\today}

\begin{abstract}
  For $p$ a prime, $G$ a finite group and $A$ a normal subset of elements of order $p$, we prove that if $A^2 = \{ab \mid a, b \in A\}$ consists of $p$-elements then $Q = \gen{A}$ is soluble. Further, if $O_p(G) = 1$, we show that $p$ is odd, $F(Q)$ is a non-trivial $p'$-group and $Q/F(Q)$ is an elementary abelian $p$-group. We also provide examples which show this conclusion is best possible.
\end{abstract}

\maketitle \pagestyle{myheadings}

\markright{{\sc }} \markleft{{\sc Chris Parker and Jack Saunders }}

{\let\thefootnote\relax\footnote{{
The first author thanks the Isaac Newton Institute for Mathematical Sciences for support during the programme \emph{Groups, representations and applications: new perspectives} when work on this paper was undertaken. This work was supported by \textbf{EPSRC Grant Number EP/R01 146 04/1}. The second author thanks the department at UWA for their hospitality and acknowledges the support of the \textbf{Australian Research Council Discovery Project Grant DP190101024.}}}

\section{Introduction}

For $A$ and $B$ subsets of a group $G$, the product of $A$ and $B$ is defined to be $AB=\{ab\mid a\in A,b\in B\}$. Obviously, if $A$ and $B$ are unions of conjugacy classes, then so is $AB$. Products of conjugacy classes in finite and algebraic groups have been studied by many authors \cite{ClassPowers,Gow,GM,GMT,GuralnickNavarro,GuralnickRobinson, Xiao}. Particularly  for finite simple groups $G$, questions about products of conjugacy classes have led to significant conjectures. Prominent amongst these are two extreme cases: Thompson's conjecture which asserts that there exists a conjugacy class $C$ of $G$ such that $G=C^2$ (so that $C^2$ attains its maximal possible size) and the Arad--Herzog conjecture which speculates that if $A$ and $B$ are conjugacy classes of $G$, then $AB$ is not a conjugacy class (so it cannot be as small as possible). In this vein of research, Guralnick and Navarro \cite{GuralnickNavarro} have demonstrated that if \(A = a^G\) is such that \(A^2\) is a conjugacy class, then \(\gen{A} = [a, G]\) is soluble while in \cite{ClassPowers} this is generalised to the statement that if \(A^n \subseteq \{1\} \cup x^G\) for some $x\in G$ and $n \ge 2$, then again \(\gen{A}\) is soluble. Our main results contribute to this type of research by investigating groups $G$ which have conjugacy classes $A$ of elements of order $p$, $p$ a prime, which have $A^2$ consisting of $p$-elements. Our main result, which uses the classification of finite simple groups, is as follows.

\begin{customthm}\label{thm:A}	
  Suppose $p$ is a prime, $G$ is a finite group, $A$ is a non-empty normal subset of elements of order $p$ in $G$ and every member of $A^2$ is a \(p\)-element. Then, setting $Q=\gen{A}$, $Q$ is soluble, and, if $O_p(G)=1$,
  then $p$ is odd, $F(Q)$ is a non-trivial $p'$-group and $Q/F(Q)$ is an elementary abelian $p$-group.
\end{customthm}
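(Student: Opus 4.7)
The plan is to prove the solubility assertion first---this is the main technical content and requires the classification of finite simple groups (CFSG)---and then deduce the structural statements when $O_p(G)=1$ via coprime action on the Fitting subgroup together with a Baer--Suzuki argument for the parity of $p$.

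For solubility I would argue by minimal counterexample: take $(G,A,p)$ with $|G|$ minimal subject to $Q=\gen{A}$ being non-soluble. After replacing $G$ by $AQ$ and factoring out the soluble radical, one reduces to the situation where $G$ has a minimal normal subgroup $N=S_1\times\cdots\times S_k$ with the $S_i$ isomorphic non-abelian simple. Since the $A^2$-condition is inherited by images, a direct argument rules out the case where some $a\in A$ permutes the factors non-trivially (by exhibiting $a,b\in A$ with $ab$ of mixed order), leaving $k=1$ and $G$ almost simple with socle $S$. The core of the proof is then a CFSG case analysis: for each non-abelian finite simple group $S$ and each prime $p$, one rules out the existence of a non-empty normal subset $A$ of elements of order $p$ in $S\le G\le\Aut(S)$ with $A^2$ consisting of $p$-elements and $\gen{A}$ non-soluble. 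This case-by-case examination of the alternating, classical, exceptional, and sporadic simple groups---via character tables, explicit class multiplication coefficients, and ad hoc arguments for each family---I expect to be the principal obstacle of the whole proof.

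Once solubility is established, the structural part is more routine. Since $A\trianglelefteq G$, the characteristic subgroup $O_p(Q)$ is normal in $G$, so $O_p(Q)\le O_p(G)=1$; because $Q$ is non-trivial and soluble, $F(Q)=O_{p'}(Q)$ is non-trivial with $C_Q(F(Q))\le F(Q)$. That $p$ is odd follows from Baer--Suzuki: if $p=2$, then for every $a\in A$ and $g\in G$ the element $a^g\in A$ is an involution with $a\cdot a^g\in A^2$ a $2$-element, so $\gen{a,a^g}$ is a dihedral $2$-group, and Baer--Suzuki forces $a\in O_2(G)=1$, a contradiction.

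To show $\bar Q=Q/F(Q)$ is elementary abelian of exponent $p$, note that no element of $A$ lies in the $p'$-group $F(Q)$, so each $\bar a$ has order exactly $p$, and $\bar A$ is a normal subset of $\bar G=G/F(Q)$ with $\bar A^2\subseteq\{p\text{-elements}\}$ generating $\bar Q$. My strategy is first to show $\bar Q$ is a $p$-group, arguing by induction on $|G|$ in $\bar G$ and using the faithful action of $\bar Q$ on the $p'$-group $F(Q)$ to rule out any $p'$-part of $\bar Q$; and then to show $\bar Q$ is abelian by analyzing commutators $[\bar a,\bar b]=\bar a^{-1}\bar{a^b}$, which lie in $\bar A^{-1}\cdot\bar A$ and therefore in the $p$-group $\bar Q$, and which should be forced to be trivial by combining the coprime action on $F(Q)$ with a further invocation of Baer--Suzuki inside $\bar Q$. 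The main obstacle after the CFSG solubility step is precisely this final commutator collapse: one must convert the hypothesis ``$\bar a\bar b$ is a $p$-element'' into genuine commutation modulo $F(Q)$, which calls for a careful joint use of coprime action, the faithfulness of $\bar Q$ on $F(Q)$, and a second Baer--Suzuki.
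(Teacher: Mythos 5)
Your solubility argument follows the paper's route essentially step for step: minimal counterexample, reduction to a unique minimal normal subgroup, elimination of the imprimitive (wreath-product) case by producing a product of mixed order, and a CFSG case analysis over almost simple groups. The deductions that $p$ is odd (Baer--Suzuki) and that $F(Q)$ is a non-trivial $p'$-group (from $O_p(Q)\le O_p(G)=1$) also match the paper. The gap is in the last step, the structure of $Q/F(Q)$, and it is twofold. First, the claim that $Q/F(Q)$ is a $p$-group is not a routine induction: the paper obtains it by showing that for each $a\in A$ the smallest subnormal subgroup $X$ of $Q$ containing $a$ is a Frobenius group with complement $\gen{a}$ (\cref{lem:class generated}), so that $X=F(X)\gen{a}$ with $F(X)\le F(Q)$; and the proof of that Frobenius statement is itself a minimal-counterexample argument ending in a $p$-stability contradiction (an $\SL_2(3)$ section inside a Frobenius quotient). ``Faithful action of $\bar Q$ on $F(Q)$'' by itself rules out nothing --- $\Alt(4)$ is generated by a class of $3$-elements and is not a $3$-group, so the $p'$-part genuinely has to be pushed down into $F(Q)$ by an argument that uses the $A^2$ hypothesis.

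Second, the ``commutator collapse'' as you describe it would fail. Once $\bar Q$ is known to be a $p$-group, ``Baer--Suzuki inside $\bar Q$'' is vacuous ($O_p$ of a $p$-group is the whole group), and the observation that $[\bar a,\bar b]=\bar a^{-1}\bar a^{\bar b}$ lies in $\bar A^{-1}\bar A$ does not connect to the hypothesis: the hypothesis controls $A^2=AA$, not $A^{-1}A$ (the $A^{-1}A$ condition is the distinct theorem of Xiao and Guralnick--Robinson cited in the introduction, and it yields nilpotence, not what is wanted here). The decisive missing idea is the paper's \cref{lem:not abundant}: if $X=\gen{c,a}$ were elementary abelian of order $p^2$ with $\abs{a^G\cap X}\ge p$, then for $t\in O_{p'}(G)$ centralizing $a^2c$ one computes $(a(ac)^t)^p=[ac,t]^p$; since $a(ac)^t\in A^2$ is a $p$-element while $[ac,t]$ lies in the $p'$-group $O_{p'}(G)$, this forces $[ac,t]=1$, and combining this with the coprime generation $O_{p'}(G)=\gen{C_{O_{p'}(G)}(y)\mid y\in X^\#}$ eventually forces $c$ to centralize $O_{p'}(G)$, a contradiction. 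That computation, fed through \cref{lem:is abundant,lem:generation abelian}, is where the $A^2$ hypothesis actually enters the proof of elementary abelianness, and it has no counterpart in your sketch.
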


 \cref{thm:A} generalizes the Baer--Suzuki Theorem which asserts that if any two elements of a conjugacy class $A$ of a finite group $G$ generate a nilpotent subgroup, then $\gen{A}$ is a normal nilpotent subgroup of $G$. If $A$ is a conjugacy class of involutions, then any two elements of $A$ generate a nilpotent subgroup if and only if they generate a dihedral $2$-group, which is if and only if their product is a $2$-element. Thus, $A$ generates a normal $2$-subgroup of $G$ if and only if $A^2=\{ab\mid a,b\in A\}$ consists of $2$-elements. This means that the content of \cref{thm:A} concerns elements of odd order.

The alternating group $\Alt(4)$ is generated by a conjugacy class $A$ of elements of order $3$. In this case $A^2$ is also a conjugacy class of elements of order $3$. This shows that we cannot replace soluble by nilpotent in the conclusion of \cref{thm:A}.

In \cite{GM} Guralnick and Malle prove that if $A$ is a conjugacy class of $p$-elements, and $AA^{-1}$ consists just of $p$-elements, then $A \subseteq O_p(G)$.

\cref{thm:A} is similar to a result of Xiao \cite[Theorem 2]{Xiao} also proved later by Guralnick and Robinson \cite[Theorem A]{GuralnickRobinson} where it is demonstrated that if $A$ is a conjugacy class of elements of order $p$ such that $x^{-1}y$ is a $p$-element for every $x,y \in A$, then $\gen{A}$ is nilpotent. As a further generalization of the Baer--Suzuki Theorem, we mention Guest's Theorem \cite{Guest} which asserts that if $p\ge 5$ is a prime and $A$ is a conjugacy class of elements of order $p$, then $\gen{A}$ is soluble if and only if $\gen{x,y}$ is soluble for all $x,y\in A$.

 We point out the following corollaries:

 \begin{Cor}\label{cor:cor11}
  Suppose that $p$, $A$ and $G$ are as in \cref{thm:A} and assume that $O_p(G)=1$. Then $p$ is odd and every element of $A^2$ has order $p$.
  \end{Cor}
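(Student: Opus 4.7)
The plan is to deduce the corollary directly from \cref{thm:A}. The statement that $p$ is odd is already one of the conclusions of \cref{thm:A} under the assumption $O_p(G) = 1$, so this part requires no additional argument and I would simply quote it.

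For the assertion on orders, I would fix an element $x = ab$ with $a,b \in A$, noting that $x \in Q = \gen{A}$ and that $x$ is a $p$-element by hypothesis. The key structural input is that, by \cref{thm:A}, the quotient $Q/F(Q)$ is an elementary abelian $p$-group and hence has exponent $p$; consequently $x^p \in F(Q)$. On the other hand, $x^p$ is itself a $p$-element, being a power of the $p$-element $x$, while $F(Q)$ is a $p'$-group (again by \cref{thm:A}). The only element common to a $p$-group and a $p'$-group is the identity, so $x^p = 1$, and the order of $x$ divides $p$.

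I do not foresee a serious difficulty here: once \cref{thm:A} is available, the corollary is essentially a one-line consequence of the exponent of $Q/F(Q)$ together with the coprimality of $F(Q)$ with $p$. The only subtle point is distinguishing ``order exactly $p$'' from ``order dividing $p$''. If the claim is meant literally, then one must additionally rule out $ab = 1$, i.e., exclude the possibility that $a^{-1} \in A$ for some $a \in A$; I would attempt such an argument by examining the images of $A$ in $Q/F(Q)$ and using the constraint that every pairwise product must be a $p$-element, so that inverse pairs in $A$ would force identity elements into $A^2$ in a way potentially incompatible with the structure already imposed by \cref{thm:A}. Otherwise, the reading ``every element of $A^2$ has order dividing $p$'' is a genuine strengthening of the hypothesis (which only gave that such elements are $p$-elements a priori of unbounded exponent), and the argument above completes the proof.
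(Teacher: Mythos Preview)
Your argument that every element of $A^2$ has order dividing $p$ is correct and matches the paper's reasoning: the structure of $Q$ from \cref{thm:A} forces the Sylow $p$-subgroups of $Q$ to be elementary abelian, so $p$-elements of $Q$ have order at most $p$.

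The gap is in ruling out $1\in A^2$, which you yourself flag. Your proposed line---looking at images in $Q/F(Q)$ and noting that inverse pairs put $1$ into $A^2$---does not lead anywhere: $1$ is a $p$-element, so its presence in $A^2$ violates neither the hypothesis nor any conclusion of \cref{thm:A}. The paper's argument is different and uses a piece of structure you did not invoke. Suppose $a,a^{-1}\in A$. Since $Q$ is soluble, $C_Q(F(Q))\le F(Q)$; as $F(Q)$ is a $p'$-group and $a$ has order $p$, $a\notin F(Q)$ and hence $a$ fails to centralise some $x\in F(Q)$. Then $[a,x]=a^{-1}a^{x}$ is a non-trivial element of $A^2$ (because $A$ is normal and $a^{-1}\in A$), yet it lies in $F(Q)$ since $a$ and $a^{x}$ have the same image in $Q/F(Q)$. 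This gives a non-trivial element of $A^2$ that is simultaneously a $p$-element and a $p'$-element, a contradiction. The key idea you are missing is to exploit $C_Q(F(Q))\le F(Q)$ to produce a \emph{non-trivial} element of $A^2\cap F(Q)$, rather than merely the identity.
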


 Evidently, if $O_p(G)\ne 1$, we cannot control the order of elements in $A^2$. For groups generated by a conjugacy class we have the following result.

\begin{Cor}\label{cor:cor3}
   Suppose that $p$, $A$ and $G$ are as in \cref{thm:A} and assume that $O_p(G)=1$. If $A$ is a conjugacy class and $G= \gen{A}$, then $p$ is odd, $G$ is a Frobenius group with Frobenius complements $\gen{a}$ for $a \in A$. Furthermore, every element of $A^2$ has order $p$.
\end{Cor}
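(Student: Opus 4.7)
The plan is to use \cref{thm:A} to reduce to the Frobenius criterion $C_F(a) = 1$, and then to prove this by comparing two natural subsets of $F = F(G)$ that parametrise the conjugates of $a$ and of $a^2$.

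By \cref{thm:A}, $F := F(G)$ is a nontrivial $p'$-group, $G/F$ is elementary abelian, and $p$ is odd. Since $A$ is a single conjugacy class of $G$ and $G/F$ is abelian, the images of all elements of $A$ in $G/F$ coincide, and as $G = \langle A \rangle$ this common image generates $G/F$; hence $|G:F| = p$ and $G = F \rtimes \langle a \rangle$ for any $a \in A$. The last sentence of the corollary is immediate from \cref{cor:cor11}, so it remains to show $C_F(a) = 1$.

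Fix $a \in A$ and set
\[
  E = \{[f, a^{-1}] : f \in F\} \quad \text{and} \quad E_2 = \{[f, a^{-2}] : f \in F\}.
\]
A direct calculation (using the convention $[x, y] = x^{-1}y^{-1}xy$) shows that $A = Ea$ and $(a^2)^G = (a^2)^F = E_2 a^2$; that $E$ and $E_2$ are $a$-invariant via the identity $g[x,y]g^{-1} = [gxg^{-1}, gyg^{-1}]$; and that the fibres of the two parametrising maps are cosets of $C_F(a^{-1}) = C_F(a^{-2}) = C_F(a)$, so $|E| = |E_2| = |F|/|C_F(a)|$. It follows that $A^2 = Ea \cdot Ea = EE \cdot a^2$, and because the $p$-elements in the coset $a^2F$ are precisely the $F$-conjugates of $a^2$ (Sylow $p$-subgroups of $G$ have order $p$), the hypothesis on $A^2$ becomes the containment $EE \subseteq E_2$.

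The key observation is that $1 = [1, a^{-1}] \in E$, so taking one factor equal to $1$ in $EE \subseteq E_2$ gives $E \subseteq E_2$, and combined with $|E| = |E_2|$ this forces $E = E_2$. Hence $EE \subseteq E$, and so the finite subset $E$ of $F$ containing $1$ and closed under multiplication is a subgroup of $F$. By definition $\langle E \rangle = [F, a^{-1}] = [F, a]$, so $E = [F, a]$; the hypothesis $G = \langle A \rangle = E\langle a\rangle$ then forces $E = F$ by comparing orders, whence $[F, a] = F$ and the equality $|E| = |F|/|C_F(a)| = |F|$ yields $C_F(a) = 1$. Therefore $a$ acts fixed-point-freely on $F$, so $G$ is a Frobenius group with kernel $F$ and complement $\langle a \rangle$. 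The main step to identify is the translation of the hypothesis into $EE \subseteq E_2$, and the cardinality-and-cancellation trick (relying on $1 \in E$) that converts this containment into the conclusion $E = F$.
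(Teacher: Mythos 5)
Your argument is correct, but it takes a genuinely different route from the paper. The paper's proof is a two-line deduction: \cref{thm:A} gives solubility and that $p$ is odd, \cref{lem:class generated} (an inductive result proved earlier via \cref{thm:GN}, the Guralnick--Navarro theorem and a $p$-stability argument) then gives the Frobenius conclusion directly, and \cref{cor:cor11} supplies the statement about orders in $A^2$. You instead exploit the \emph{full} conclusion of \cref{thm:A} --- that $G/F(G)$ is an elementary abelian $p$-group --- to observe that a single conjugacy class generating $G$ forces $\abs{G:F}=p$ and $G=F\rtimes\gen{a}$, and then you prove $C_F(a)=1$ by a self-contained counting argument: writing $A=Ea$ and $(a^2)^G=E_2a^2$ with $E=\{[f,a^{-1}]:f\in F\}$, $E_2=\{[f,a^{-2}]:f\in F\}$, translating the hypothesis into $EE\subseteq E_2$, and using $1\in E$ together with $\abs{E}=\abs{E_2}=\abs{F:C_F(a)}$ to force $E=E_2$ to be a subgroup equal to $F$. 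All the individual steps check out (the identification of the $p$-elements of $a^2F$ with $(a^2)^F$ via Sylow, the $a$-invariance of $E$, the fibre computation, and the fact that a finite subset containing $1$ and closed under multiplication is a subgroup). What your approach buys is transparency at the level of the corollary: given \cref{thm:A}, the Frobenius property falls out of an elementary commutator/cardinality trick reminiscent of the proof of \cref{thm:GN}, without re-invoking \cref{lem:class generated}; what the paper's approach buys is brevity, since \cref{lem:class generated} has already been established (it is needed independently in the proof of \cref{thm:A} itself, where the reduction to $\abs{G:F}=p$ is not yet available).
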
}

 Notice that $\Alt(4)\times \Alt(4) \times \Alt(4)$ has a diagonal subgroup of index $3$ which is not a direct product of $\Alt(4)$ subgroups and is generated by a union of two conjugacy classes $A$ of elements of order $3$ with every element in $A^2$ of order $3$. This shows that when $O_p(G) \ne 1$, we cannot conclude in \cref{thm:A} that $\langle A\rangle$ is a direct product of Frobenius groups as might be hoped when considering \cref{cor:cor3}.

The paper develops as follows. In \cref{sec:preliminary}, we present the group theoretical results which support our proofs and then \cref{sec:sol} investigates soluble groups satisfying the hypothesis of \cref{thm:A}. After this section, the main goal is to demonstrate that $\langle A\rangle$ is soluble and this part of the proof requires the classification of finite simple groups. \cref{sec:reduction,sec:Lie type,sec:computer} are devoted to the machinery for the proof of what could be considered the main theorem of the article.

\begin{Thm}\label{thm:main}
  Suppose $p$ is a prime, $G$ is a finite group and $A$ is a normal subset of elements of order $p$ in $G$. If every member of $A^2$ is a \(p\)-element, then $\gen{A}$ is a soluble normal subgroup of $G$.
\end{Thm}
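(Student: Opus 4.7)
My plan is to argue by minimal counterexample and invoke the classification of finite simple groups. Suppose $(G,A)$ is a counterexample with $|G|$ minimal. Because $A$ is a normal subset of $G$, the subgroup $Q=\gen{A}$ is normal in $G$ and inherits the hypothesis, with $A$ normal in $Q$; minimality lets me replace $G$ by $Q$, so I will assume $G=\gen{A}$. I then reduce to $O_p(G)=1$: if $N=O_p(G)\ne 1$, images of $p$-elements in $G/N$ are still $p$-elements, so $(G/N,\overline{A})$ still satisfies the hypothesis with $\gen{\overline{A}}=G/N$. By minimality $G/N$ is soluble, and since $N$ is a $p$-group, $G$ is too, contradicting the choice of $(G,A)$.

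Next I reduce to an almost simple configuration. With $O_p(G)=1$, the Fitting subgroup $F(G)$ is a $p'$-group. If the layer $E(G)=1$, then $F^*(G)=F(G)$ is a $p'$-group on which $G$ acts faithfully, and combined with the soluble analysis of \cref{sec:sol} this should force $G$ itself to be soluble. So some component $L\le E(G)$ exists. The set $A$ permutes the components of $E(G)$ by conjugation; by minimality I plan to contract to a single $G$-orbit of components not centralised by $A$, and then by factoring out the largest normal subgroup acting trivially on $L$, to the case $F^*(G)=L$ quasisimple. Since $Z(L)\le F(G)$ is a $p'$-group, passing to $G/Z(L)$ I may assume $F^*(G)=S$ is a nonabelian finite simple group and $G\le\Aut(S)$, with $A$ a union of $G$-classes of elements of order $p$ in $\Aut(S)$ whose pairwise products are all $p$-elements.

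With $G$ almost simple I invoke CFSG and treat $S$ case by case, matching the structure of \cref{sec:reduction,sec:Lie type,sec:computer}. For $S=\Alt(n)$ with $n\ge 5$ I would exhibit by direct cycle-type analysis two $\Aut(S)$-conjugate elements of order $p$ whose product has order divisible by some prime different from $p$. For groups of Lie type I would split according to whether $p$ is the defining characteristic of $S$: in defining characteristic, I would produce two conjugate root elements whose product is semisimple of order prime to $p$; in cross characteristic, I would use a maximal torus containing a Sylow $p$-subgroup together, when applicable, with a Zsigmondy prime argument to find two conjugate $p$-elements whose product has order coprime to $p$. The sporadic groups and a finite list of small Lie type exceptions where these generic arguments fail are handled by direct computation, which is what \cref{sec:computer} is designed to supply.

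The principal obstacle is the Lie type step: small rank groups, bad primes, and the exceptional series are the most delicate, since the uniform torus and root subgroup arguments degenerate there. Identifying the precise threshold below which computation is required, and for each remaining family choosing a universal pair of conjugates whose product is guaranteed not to be a $p$-element, will be the technical heart of the argument. A secondary subtlety is bookkeeping in the reduction to almost simple: one must be careful that the property \emph{every product of two elements of $A$ is a $p$-element} survives restriction to $F^*(G)$ and the quotient by $Z(L)$, which requires showing that $A$ can be chosen, without loss, to consist of elements lying in (or inducing the right automorphisms of) the socle.
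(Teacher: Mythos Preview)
Your architecture matches the paper's---minimal counterexample, reduce to an almost simple configuration, then CFSG---but your reduction has gaps and your Lie type plan diverges substantially. In the reduction: the $E(G)=1$ case does not follow from \cref{sec:sol}, which \emph{assumes} solubility; the correct argument is simply that any nontrivial soluble normal subgroup $M$ gives $G/M$ soluble by minimality, forcing $F(G)=1$ directly. More importantly, the paper minimises $|G|+|A|$ rather than $|G|$, which buys two tools you lack: $A$ becomes a single conjugacy class (\cref{lem:lem2}), and for every proper subgroup $H$ meeting $A$ one has $\gen{A\cap H}$ soluble and normal in $H$ (\cref{lem:basic1}). From this the paper obtains the tight structure $G=\gen{a}N$ with $|G:N|\le p$ (\cref{lem:lem5}), and then the workhorse \cref{lem:3.5}: whenever $a\in A$ normalises $H$ with $H\gen{a}\ne G$, $a$ centralises $E(H)$. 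Your passage from several components to one is also only sketched; the paper handles it with an explicit wreath-product calculation (\cref{lem:dih calc}, \cref{prop:invert stuff}, \cref{lem:simp}) rather than pure minimality. Your closing worry about restricting $A$ to $F^*(G)$ is a symptom of this difference: the paper never needs such a restriction.

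The Lie type analysis in the paper runs almost entirely on \cref{lem:basic1} and \cref{lem:3.5}: one shows $A$ misses every parabolic subgroup (\cref{lem:notpara}) because otherwise $\gen{A\cap P}B$ would be a soluble maximal parabolic, pushing $N$ onto the short list of \cref{lem:solvpara} handled by computer. In defining characteristic this is already a contradiction for rank $\ge 2$ since a Borel contains a Sylow $p$-subgroup; in cross characteristic one combines Gow's theorem (\cref{lem:GowApp}) with Borel--Tits to force $a$ into a parabolic. Your alternative---exhibiting explicit conjugate pairs whose product is not a $p$-element via root elements in defining characteristic or torus-plus-Zsigmondy arguments in cross characteristic---might be made to work, but you have not addressed that elements of $A$ need not be root or regular elements, nor that they may lie outside $N$ and induce diagonal automorphisms; the paper's structural route sidesteps exactly these difficulties.
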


For almost simple groups $G$, Guralnick, Malle and Tiep \cite{GMT} show that, if $c$ and $d$ are elements of prime order $p\ge 5$ in $G$ then there exists $g \in G$ such that $cd^g$ is not a $p$-element. We could of course apply this result to abbreviate some of the arguments used in the proof of \cref{thm:main} when $p \ge 5$; however this would still leave the $p=3$ case to handle and presenting a complete analysis does not significantly lengthen the arguments.

That \cref{thm:main} cannot be generalized to the situation where every element of $A^2$ is either a $p$-element or an $r$-element with $p$ and $r$ distinct primes and with both possibilities arising is demonstrated by the symmetric groups with $A$ the conjugacy class of transpositions.

We remark that small groups are easily handled using {\sc Magma} and {\sc GAP}\cite{Magma, GAP4} and some {\sc GAP} code is included in \cref{sec:appendix}. The instances where we apply computer calculations are presented in \cref{sec:computer} and sometimes we reference forward to \cref{prop:sporadics}.

Finally, in \cref{sec:proofs} we provide the proofs of the results stated in the introduction.
\medskip
\renewcommand{\abstractname}{Acknowledgements}
\begin{abstract}
We thank the referee for the key observation in the proof of \cref{lem:dih calc}. This  provided a stronger result with a much nicer proof than our original  statement about wreath products. \cref{prop:invert stuff} was also provided by the referee.
\end{abstract}

\section{Preliminary results}\label{sec:preliminary}

This section is devoted to the presentation of general group theoretic results which will be used to prove the main theorem. The first result allows us to demonstrate that wreath products of non-abelian simple groups cannot provide a counter example to \cref{thm:main}.

\begin{Lem} \label{lem:dih calc}
  Let \(p\) be a prime and \(H\) be a non-abelian group of order not divisible by \(p\). Suppose that \(W \cong H \wr C\) where $C$ is cyclic of order $p$ and let \(x \in W\) have order \(p\). Then there exists \(w \in W\) such that \(x^w x\) is not a \(p\)-element.
\end{Lem}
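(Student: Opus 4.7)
\emph{Proof plan.} The plan is to reduce, by conjugation inside $W$, to the case where $x$ equals a fixed generator $c$ of $C$, and then to construct $w \in H^p$ by hand so that computing $(c^w c)^p$ reduces to a non-trivial commutator in $H$. For the reduction, since $\gcd(|H^p|,p) = 1$ no non-trivial element of $H^p$ has $p$-power order, so $x$ has non-trivial image in $W/H^p \cong C$, and after replacing the generator $c$ by a suitable power we may write $x = gc$ with $g = (h_1,\ldots,h_p) \in H^p$. Expanding $x^p$ in the semidirect product shows that $x$ has order $p$ exactly when $h_p h_{p-1}\cdots h_1 = 1$, and under this relation the system $h_i = u_i u_{i-1}^{-1}$ (indices mod $p$, $u_0 := u_p$) can be solved iteratively in $H^p$ so that $u^{-1} x u = c$. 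Since the conclusion depends only on the $W$-conjugacy class of $x$, we assume $x = c$. For $p = 2$, the lemma is essentially immediate: with $w = (w_1, w_2) \in H^2$ one has $c^w c = (w_1^{-1} w_2,\, w_2^{-1} w_1)\in H^2$, and taking $w_1 \ne w_2$ (possible since $|H| > 1$) yields a non-identity element of $H^2$ of odd order, which is not a $2$-element.

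Suppose now $p$ is odd. For $w = (w_1,\ldots,w_p) \in H^p$ set $a_i := w_i^{-1} w_{i-1}$ (indices mod $p$, $w_0 := w_p$); a direct computation gives $y := c^w c = (a_1,\ldots,a_p)\, c^2$, and as $w$ ranges over $H^p$ the tuple $(a_1,\ldots,a_p)$ ranges over all tuples satisfying the single relation $a_p a_{p-1}\cdots a_1 = 1$. The same coprimality argument as in the reduction shows that $y$ is a $p$-element if and only if $y^p = 1$, so it suffices to produce $w$ with $y^p \ne 1$. Iterating the wreath-product multiplication (using $c^{2p} = 1$ together with $\gcd(2,p) = 1$) yields
\[
[y^p]_1 \;=\; \prod_{j=0}^{p-1} a_{1 - 2j \bmod p},
\]
a product in which each of $a_1,\ldots,a_p$ appears exactly once (in a specific cyclic order). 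Setting $a_4 = a_5 = \cdots = a_p = 1$, the defining relation forces $a_1 = a_2^{-1} a_3^{-1}$ and the product collapses to $a_1 a_2 a_3 = a_2^{-1} a_3^{-1} a_2 a_3$. Choosing $a_2, a_3 \in H$ not commuting — which is possible because $H$ is non-abelian — gives a non-identity commutator, so $y^p \ne 1$ as required.

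The main obstacle is tracking the iterated conjugation carefully enough to establish the formula $[y^p]_1 = \prod_j a_{1 - 2j}$, in particular the cyclic shift induced by $c^2$ and the fact that $\gcd(2,p)=1$ rearranges the $a_i$ into a genuinely different ordering from the one appearing in the relation $a_p \cdots a_1 = 1$; once that is set up the non-abelian hypothesis on $H$ finishes the argument.
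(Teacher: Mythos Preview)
Your proof is correct and follows essentially the same approach as the paper: write $x^w x = (a_1,\ldots,a_p)\,x^2$, use the criterion that an element of $N x^2$ is a $p$-element exactly when the appropriate ``stride-$2$'' product of its $N$-coordinates is trivial, and then choose $w$ so that this product collapses to a non-trivial commutator in $H$. The paper is slightly more economical in two places: it does not first conjugate $x$ to $c$ (since the computation $x^w x = w^{-1} w^{x^{-1}} x^2$ and the $p$-element criterion depend only on the conjugation action of $x$ on $N$, which is already the cyclic shift), and it writes down the explicit $w = (a,b^{-1},1,\ldots,1)$ directly rather than parametrising all admissible tuples and then specialising---but the resulting choice and commutator are the same as yours.
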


\begin{proof}
  If \(p = 2\) then \(x^w x\) lies in \(O_{p'}(W)\) and can be chosen to be non-trivial. Thus assume that \(p > 2\).

  Let \(N = O_{p'}(W) \cong H_1 \times \dots \times H_p\) where \(H_i \cong H\) for all \(i\) and assume that \((h_1, \ldots, h_p)^x = (h_p, h_1, \ldots, h_{p-1})\). Note that \((h_1, \ldots, h_p)x\) is a \(p\)-element if and only if \(h_1 \dots h_p = 1\) and similarly \((h_1, \ldots, h_p)x^2\) is a \(p\)-element if and only if \(h_1 h_3 \dots h_p h_2 \dots h_{p-1} = 1\). 
  Since \(H\) is non-abelian, we may choose \(a\), \(b \in H\) such that \([a,b]\neq 1\). Then for \(w = (a, b^{-1}, 1, \ldots, 1)\) we have that
  \[x^w x = w^{-1} w^{x^{-1}} x^2 = (a^{-1} b^{-1}, b, 1, \ldots, 1, a) x^2\]
  is a \(p\)-element if and only if \(a^{-1}b^{-1} a b= [a, b] = 1\). This contradicts the choice of $a$ and $b$  and thus \(x^w x\) is not a \(p\)-element.
\end{proof}

\begin{Lem}  \label{prop:invert stuff}
  Suppose \(p\) is an odd prime and \(G\) is a non-abelian finite simple group. Then \(G\) contains a non-abelian \(p'\)-subgroup.
\end{Lem}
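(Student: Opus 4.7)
The plan is to use Walter's classification of finite simple groups with abelian Sylow $2$-subgroups in order to reduce to a short list of explicit groups. The key preliminary observation is that, since $p$ is odd, any Sylow $2$-subgroup $T$ of $G$ is automatically a $p'$-subgroup, so if $T$ is non-abelian we are finished at once. By Walter's theorem, the remaining non-abelian simple groups are exhausted by four families: $\PSL_2(2^n)$ with $n \ge 2$, $\PSL_2(q)$ with $q$ odd and $q \equiv \pm 3 \pmod 8$ (hence $q \ge 5$), the Ree groups $\Ree(q)$ with $q = 3^{2k+1}$ and $k \ge 1$, and the sporadic Janko group $J_1$. I would treat these four families in turn.

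For $\PSL_2(2^n)$, the group contains dihedral subgroups of orders $2(2^n - 1)$ and $2(2^n + 1)$; since $\gcd(2^n - 1, 2^n + 1) = 1$, the odd prime $p$ divides at most one of these orders, and both dihedrals are non-abelian as $n \ge 2$. For $\PSL_2(q)$ with $q$ odd and $q \ge 5$, I would use the standard dihedral subgroups of orders $q - 1$ and $q + 1$: if $p$ divides $q$ or $q - 1$ then the dihedral of order $q + 1 \ge 6$ is a non-abelian $p'$-subgroup; if $p$ divides $q + 1$ and $q \ge 7$ then the dihedral of order $q - 1 \ge 6$ works; the only remaining small case is $(q, p) = (5, 3)$, which is handled by the Borel subgroup of $\PSL_2(5)$, a dihedral group of order $10$. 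For $J_1$, an inspection of its short list of maximal subgroups (which contains the Frobenius groups $7{:}6$, $11{:}10$, $19{:}6$ and the subgroup $D_6 \times D_{10}$) immediately yields a non-abelian $p'$-subgroup for each odd prime $p \in \{3, 5, 7, 11, 19\}$ dividing $|J_1|$.

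The main obstacle is the Ree case $\Ree(q)$ with $q = 3^{2k+1} \ge 27$. Here I would appeal to the four classes of maximal tori, of orders dividing $q - 1$, $q + 1$, $q + \sqrt{3q} + 1$, and $q - \sqrt{3q} + 1$; a short calculation shows these four numbers are pairwise coprime apart from a common factor of $2$ in $q \pm 1$, and are all coprime to $3$. The first two tori sit inside dihedral subgroups of the subgroup $\PSL_2(q) \le \Ree(q)$ (visible in the involution centralizer $C_2 \times \PSL_2(q)$), while each of the two twisted tori $C_{q \pm \sqrt{3q} + 1}$ is normalized by a Frobenius group of shape $C_{q \pm \sqrt{3q} + 1} {:} C_6$, whose index-$3$ subgroup $C_{q \pm \sqrt{3q} + 1} {:} C_2$ is already a non-abelian dihedral group of order $2(q \pm \sqrt{3q} + 1)$ coprime to $3$. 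Since the odd prime $p$ divides at most one of the four torus orders, at least one of these four non-abelian subgroups is a $p'$-subgroup, completing the analysis.
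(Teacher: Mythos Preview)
Your proof is correct and follows the same opening move as the paper: since $p$ is odd, a non-abelian Sylow $2$-subgroup already does the job, so Walter's classification reduces the problem to the four families $\PSL_2(2^n)$, $\PSL_2(q)$ with $q\equiv\pm3\pmod 8$, $\Ree(3^{2n+1})$, and $\J_1$. Your treatment of the $\PSL_2$ families via the two dihedral normalisers of tori (whose orders have $\gcd$ equal to $2$, hence are not both divisible by the odd prime $p$) is essentially the paper's argument as well.

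Where you diverge is in the handling of $\Ree(q)$ and $\J_1$. You deal with these directly: for $\J_1$ by inspecting its list of maximal subgroups, and for $\Ree(q)$ by exhibiting non-abelian dihedral subgroups attached to each of the four maximal tori, whose orders are pairwise coprime apart from a factor of $2$. The paper instead disposes of both cases by induction on $\abs{G}$: since $\PSL_2(8)\cong\Ree(3)'$ embeds in every $\Ree(3^{2n+1})$ and $\PSL_2(11)$ embeds in $\J_1$, each of these groups contains a proper non-abelian simple subgroup to which the inductive hypothesis applies, reducing everything to the $\PSL_2$ case already settled. The paper's route is shorter and avoids your torus-order arithmetic and maximal-subgroup inspection; your route is more explicit and self-contained, not relying on knowledge of those particular embeddings.
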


\begin{proof}
  If the Sylow 2-subgroups of \(G\) are non-abelian then the result is clear. The non-abelian simple groups with abelian Sylow $2$-subgroups are \( \PSL_2(q)\) with $q=2^n\ge 4$ or $q\equiv 3,5 \pmod 8$ with $q\ne 3$, \(\Ree(3^{2n+1})\)  with $n\ge 1$  and  \(\J_1\). By induction we may assume that every proper subgroup of \(G\) is non-soluble, as $\PSL_2(8) \cong \Ree(3)' < \Ree(3^{2n+1})$ and $\PSL_2(11)$ is isomorphic to a subgroup of $\J_1$, we may assume that \(G=\PSL_2(q)\). However, in this case there are non-abelian dihedral subgroups with 2 being the greatest common divisor of their orders and so we are done.
\end{proof}

The next two lemmas are used in the final arguments which establish \cref{thm:A} in \cref{sec:proofs}.

\begin{Lem}\label{lem:is abundant}
  Suppose that $P$ is a $p$-group and $a\in P$ has order $p$. Assume there exists an elementary abelian subgroup of $P$ which is normalized but not centralized by $a$. Then there is an elementary abelian subgroup $X$ of $P$ of order $p^2$ such that $\abs{a^P \cap X}=p$ and $X \cap P' \ne 1$.
\end{Lem}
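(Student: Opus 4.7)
The plan is to construct $X$ explicitly as $\langle a,b\rangle$, where $b$ is a carefully chosen element of $E\cap P'$ of order $p$ that commutes with $a$; here $E$ denotes the given elementary abelian subgroup normalized but not centralized by $a$.

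I would begin by viewing $E$ additively as an $\mathbb{F}_p\langle a\rangle$-module, so that the operator $\tau := a-1$ is nilpotent on $E$ of index at most $p$ (since $a^p = 1$) and is nonzero on $E$ (since $a$ does not centralize $E$). Let $k\ge 1$ be maximal with $\tau^k\ne 0$ on $E$, pick $e' \in E$ with $\tau^k e'\ne 0$, and set $e := \tau^{k-1}e'$. Then $b := \tau e$ satisfies $b\ne 0$ while $\tau b = \tau^{k+1}e' = 0$. Translating back to multiplicative notation, $b = [e,a]$ is a non-identity element of order $p$ lying in $E\cap P'$ and commuting with $a$. Since $E$ is abelian but $a$ does not centralize $E$, we have $a\notin E$, and in particular $a\notin\langle b\rangle\subseteq E$. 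Hence $X := \langle a,b\rangle$ is elementary abelian of order $p^2$ and $b\in X\cap P'\setminus\{1\}$, which handles one of the two required conditions.

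For the conjugate count, I would use $[a,e]=b^{-1}$ together with $[b,e]=1$ (valid because $b,e\in E$ is abelian) to show by an immediate induction that $a^{e^{i}} = ab^{-i}$ for $i=0,1,\ldots,p-1$. These are $p$ distinct $P$-conjugates of $a$ that fill out the coset $a\langle b\rangle\subseteq X$, giving $|a^P\cap X|\ge p$.

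The main obstacle will be establishing the matching upper bound $|a^P\cap X|\le p$. Any stray $P$-conjugate $a^g = a^i b^j\in X$ with $i\ne 1$ would either (if $i=0$) place a conjugate of $a$ into the abelian subgroup $E$, or (if $i\in\{2,\ldots,p-1\}$) after conjugating by powers of $e$ force an entire further coset $a^i\langle b\rangle$ of $p$ conjugates of $a$ into $X$. My plan to preclude these is to refine the initial choice of $E$ (and of $e$ within $E$)---for instance by selecting $E$ of minimal rank among elementary abelian subgroups normalized but not centralized by $a$---and to exploit the rigidity of the resulting short Jordan block structure of $\tau$ on the $\mathbb{F}_p\langle a\rangle$-submodule generated by these putative extra conjugates. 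The Jordan-chain construction yielding $b$ and the lower bound of $p$ conjugates are essentially routine; the careful bookkeeping needed to rule out stray conjugates is where the real work lies.
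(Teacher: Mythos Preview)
Your construction of $X = \langle a, b\rangle$ and the lower bound $|a^P \cap X| \ge p$ are correct. Your approach---direct construction---differs from the paper's, which argues by minimal counterexample to reduce to the case where $P = E\langle a\rangle$ is extraspecial of order $p^3$ (dihedral of order $8$ when $p=2$), where the count is immediate since $|a^P| = p$ and $a^P \subseteq X$.

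The gap is exactly where you locate it, but your plan to close it is misdirected. Stray conjugates $a^g$ arise from conjugation by arbitrary $g \in P$, not just by elements of $E\langle a\rangle$, so refining the choice of $E$ or analysing the Jordan structure of $\tau$ on $E$ cannot constrain them. What is needed is a global nilpotency argument. Set $K = \langle a\rangle^P$ and $N = [K,P] \trianglelefteq P$; then $b = [e,a] \in N$, and $a^P \subseteq aN$ because $a^{-1}a^g = [a,g] \in [K,P] = N$ for every $g \in P$. The key point is that $a \notin N$: otherwise $K \le N$, whence $N = [K,P] \le [N,P] < N$ by nilpotency of $P$, contradicting $N \ne 1$ (as $b \ne 1$). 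Since $a \notin N$ and $b \in N$, we get $X \cap aN = a\langle b\rangle$, so $|a^P \cap X| \le p$, finishing the proof. Incidentally, your own observation already handles $i \in \{2,\ldots,p-1\}$: once the full coset $a^i\langle b\rangle$ lies in $a^P$ you obtain $a^i \in a^P$, impossible in a $p$-group because the induced automorphism of $\langle a\rangle$ would have order dividing both $p-1$ and a power of $p$. Only the case $i = 0$ genuinely requires the nilpotency step.
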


\begin{proof}
  Assume that $P$ is a minimal counterexample to the claim. Choose $E \le P$ elementary abelian of minimal order such that $a \not \in C_P(E)$ and $E^a=E$. Then, the minimality of $P$ yields $P=E\gen{a}$. Since $[E,\gen{a}]< E$, $[E,\gen{a}] \le C_E(a)<E$. Let $e\in E\setminus C_E(a)$, then $\gen{[e,a],e}$ is normalized by not centralized by $a$. Hence $E=\gen{[e,a],e}$ has order $p^2$ by the minimality of $E$ and $P$ is extraspecial of order $p^3$ (dihedral for \(p = 2\)). Thus $X=\gen{[e,a],a}$ is normalized but not centralized by $P$ and $C_{P}(X)= X$. It follows that $a^P\subseteq X$ and $\abs{a^P\cap X}=p$. Finally, we note that $[e,a]\in X\cap P'$ and we have a contradiction.
\end{proof}

\begin{Lem}\label{lem:generation abelian}
  Suppose that $p$ is a prime, $G$ is a $p$-group and $A$ is a non-empty normal subset of $G$ consisting of elements of order $p$. Assume that for all $a\in A$, $a$ centralizes every elementary abelian $p$-subgroup of $G$ which it normalizes. Then $\gen {A}$ is an elementary abelian normal subgroup of $G$.
\end{Lem}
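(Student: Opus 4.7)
My approach is to argue by induction on $|G|$. The crucial preliminary remark is that the hypothesis descends to any subgroup $H \le G$: the set $A \cap H$ is $H$-normal because $H$ normalizes $A$, its elements have order $p$, and every elementary abelian $p$-subgroup of $H$ is also one of $G$, so the centralizing condition on each element of $A \cap H$ is inherited unchanged. Since $\langle A \rangle$ is generated by elements of order $p$, it suffices to prove it is abelian; so the goal reduces to showing $[a, b] = 1$ for all $a, b \in A$.

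Fix $a, b \in A$. If $\langle a, b \rangle$ is a proper subgroup of $G$, then the inductive hypothesis applied to $\langle a, b \rangle$ with the non-empty normal subset $A \cap \langle a, b \rangle$ already gives $[a, b] = 1$. So suppose $\langle a, b \rangle = G$, and examine the normal closure $N = \langle a^G \rangle$. If $N < G$, then induction applied to $N$ with the normal subset $A \cap N$ shows that $N$ is elementary abelian; since $b \in A$ normalizes the normal subgroup $N$ of $G$, the hypothesis forces $b$ to centralize $N$, and as $a \in N$ this yields $[a, b] = 1$.

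The remaining case $\langle a^G \rangle = G$ I would dispatch using the Burnside basis theorem: $G/\Phi(G)$ is elementary abelian, so all $G$-conjugates of $a$ coincide modulo $\Phi(G)$, and hence $G/\Phi(G)$ is cyclic of order $p$ generated by the image of $a$. Therefore $G$ itself is cyclic, in particular abelian, and $[a, b] = 1$ trivially. The only step that really exploits the centralizing hypothesis is the passage from ``$N$ is elementary abelian'' to ``$b$ centralizes $N$'' in the intermediate case; the remaining cases are handled by a clean induction combined with the Burnside basis theorem, and I do not anticipate a serious obstacle.
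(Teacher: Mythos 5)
Your proof is correct and runs on essentially the same engine as the paper's: induct on $|G|$, note that the hypotheses descend to subgroups, obtain by induction an elementary abelian normal subgroup containing $a$, and invoke the centralizing hypothesis to force every $b \in A$ to centralize it. The paper compresses your three-case analysis into one step by taking a maximal (hence normal) subgroup $M$ of $G$ containing $a$, so that $A \cap M$ is a $G$-normal subset and the cases $\langle a, b\rangle = G$ and $\langle a^G\rangle = G$ (and with them the Burnside basis theorem) never need separate treatment.
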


\begin{proof}
  Suppose that $G$ is a counterexample of minimal order and let \(a \in A\). If $G=\gen{a}$ then we have an immediate contradiction. Let $M$ be a maximal subgroup of $G$ which contains $a$. Then $A \cap M$ is non-empty and the minimal choice of $G$ gives that $\gen{A \cap M}$ is elementary abelian. Since $M$ is normal in $G$, $A\cap M$ is a non-empty normal subset of $G$. Hence $\gen{A \cap M}$ is normalized, and thus centralized by every element of $A$. In particular for any \(b \in A\), $b$ centralizes $a\in \gen{A \cap M}$ and so $a \in Z(\gen{A})$. Since this is true for all $a \in A$, $\gen{A}$ is elementary abelian. This contradiction proves the lemma.
\end{proof}

We continue this section with an elementary consequence of a theorem of Gross, which applies to all finite simple groups.

\begin{Lem}\label{lem:GrossApp}
	Suppose that $r$ is an odd prime, $K=\Inn(K)$ is a finite simple group of order divisible by $r$ and $\alpha \in \Aut(K)\setminus K$ has order $r$. Then $C_{K\gen{\alpha}}(\alpha) \cap \alpha^K \not \subseteq \gen{\alpha}$.
\end{Lem}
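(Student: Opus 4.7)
The plan is to translate the desired non-inclusion into a numerical lower bound and then invoke Gross's theorem to eliminate one exceptional configuration.

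First I would note that, since $\alpha \in \Aut(K)\setminus K$, we have $\gen{\alpha}\cap K\alpha = \{\alpha\}$; as every element of $\alpha^K$ lies in the coset $K\alpha$, this gives $\alpha^K \cap \gen{\alpha} = \{\alpha\}$. So the conclusion $\alpha^K \cap C_{K\gen{\alpha}}(\alpha) \not\subseteq \gen{\alpha}$ is equivalent to the inequality $\abs{\alpha^K \cap C_{K\gen{\alpha}}(\alpha)} \geq 2$.

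To bound this cardinality from below, I would let $\gen{\alpha}$ act on $\alpha^K$ by conjugation. Its fixed points are precisely the elements of $\alpha^K$ that commute with $\alpha$, so they form the set $\alpha^K \cap C_{K\gen{\alpha}}(\alpha)$. Since $r$ is prime, every non-trivial orbit has size $r$, and hence the number of fixed points $F$ satisfies $F \equiv \abs{\alpha^K} = [K:C_K(\alpha)] \pmod r$. When $r$ divides $[K:C_K(\alpha)]$ this congruence forces $r \mid F$; combined with $\alpha$ itself being a fixed point we get $F \geq r \geq 3$, and the lemma follows.

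The remaining case is $r \nmid [K:C_K(\alpha)]$, which is equivalent to $C_K(\alpha)$ containing a Sylow $r$-subgroup $S$ of $K$, so that $\alpha$ centralizes $S$. This is precisely the situation that Gross's theorem on automorphisms centralizing a Sylow $p$-subgroup rules out: for odd $r$, any $r$-power-order automorphism of a finite group which centralizes a Sylow $r$-subgroup is inner. Applied to $K$ this yields $\alpha \in \Inn(K) = K$, contradicting $\alpha \in \Aut(K)\setminus K$. The main obstacle is therefore just to locate and quote Gross's theorem in this precise form, which matches the preamble's description of the lemma as an \emph{elementary consequence} of a theorem of Gross; once that citation is in place, the orbit-counting step above finishes the proof immediately.
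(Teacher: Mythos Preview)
Your argument is correct, and it differs from the paper's route while resting on the same use of Gross's theorem. The paper works entirely inside a Sylow $r$-subgroup $R\in\Syl_r(K\gen{\alpha})$: after Gross gives $\alpha\notin C_R(R\cap K)$, it uses normalizer growth in the $r$-group $R$ to exhibit an explicit $x$ with $\beta=\alpha^x$ centralizing $\alpha$ and $\gen{\beta}\neq\gen{\alpha}$. Your proof instead lets $\gen{\alpha}$ act on $\alpha^K$ and observes that the fixed-point count is congruent to $[K:C_K(\alpha)]$ modulo $r$; Gross is then invoked only to dispose of the exceptional case $r\nmid[K:C_K(\alpha)]$. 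Both proofs use Gross in exactly the same way (an outer automorphism of odd prime order $r$ cannot centralize a Sylow $r$-subgroup of the simple group $K$), but yours replaces the constructive $r$-group step with a clean orbit-counting argument. As a bonus, in the generic case $r\mid[K:C_K(\alpha)]$ your method yields at least $r$ commuting $K$-conjugates of $\alpha$, not just two; the paper's construction gives a concrete witness but no such multiplicity.
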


\begin{proof}
	Let $R \in \Syl_r(K\gen{\alpha})$ with $\alpha \in R$ and set $P= R \cap K$. As $r$ is odd, \cite[Theorem B]{Gross} implies that $\alpha \not \in C_R(P)=Z(P)$. In particular, $C_{P}( \alpha ) < P$ and there exists $x \in N_{P}(C_{P}(\alpha ))\setminus C_{P}(\alpha)$ such that $\gen{\beta} = \gen{\alpha}^x \leq Z(C_P(\alpha))$. If $ \gen{\alpha} = \gen{\beta}$, then $[\alpha,x] =\alpha^{-1}\alpha^x \in K \cap \gen{\alpha}$. Since $\alpha$ has order $r$, $K \cap \gen{\alpha} = 1$, and thus $x \in C_P(\alpha)$, a contradiction.
	Hence $ \gen{\alpha} \ne \gen{\beta}$ and this proves the claim.
\end{proof}

\begin{Lem}\label{lem:solvpara}
	Suppose that $K$ is the derived group of a Lie type group $L$ of rank at least $2$. Assume that $P \le L$ is a maximal parabolic subgroup of $L$. If $P$ is soluble, then $K$ is one of the following groups: $\PSL_3(2)$, $\PSL_3(3)$, $\PSL_4(2)$, $\PSL_4(3)$, $\PSU_4(2)$, $\PSU_4(3)$, $ \PSU_5(2)$, $ \PSp_4(2)'$, $ \PSp_4(3)$, $\PSp_6(2)$, $\PSp_6(3)$, $\Omega_7(3)$, $\Omega_8^+(2)$, $\Omega_8^+(3)$, $\G_2(2)'$, $\G_2(3)$, $\triality(2)$, $ \triality(3)$, $\BigRee(2)'$.
\end{Lem}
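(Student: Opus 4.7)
The plan is to use the standard structure theorem for parabolic subgroups. Let $p$ be the defining characteristic of $L$ and write $P=U\rtimes L_J$ with $U=O_p(P)$ the unipotent radical and $L_J$ a Levi complement. Since $U$ is a $p$-group it is soluble, so $P$ is soluble if and only if $L_J$ is, and this is in turn equivalent to the semisimple part $[L_J,L_J]$ being soluble, since $L_J$ is a central product of $[L_J,L_J]$ with an algebraic torus. Moreover $[L_J,L_J]$ is a central product of (possibly twisted) Lie type groups whose Dynkin diagram is obtained from that of $L$ by removing one orbit of nodes under the twisting automorphism, with field extensions appearing on the factors coming from non-trivial orbits.

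The key auxiliary input is the (well-known) classification of soluble Lie type groups of positive rank: such a group over the field of $q$ elements is soluble only if it is one of $\SL_2(2)$, $\SL_2(3)$, $\SU_3(2)$ or $\Sz(2)$. With this in hand the proof reduces to a case-by-case check through the simple types of Lie rank $\ge 2$. In type $A_n$ the Levi at node $k$ has semisimple part $\SL_k(q)\times \SL_{n+1-k}(q)$, and solubility forces $n\in\{2,3\}$ and $q\in\{2,3\}$, giving the four $\PSL$ entries. In type ${}^2A_n$ any parabolic attached to an isotropic $k$-space with $k\ge 2$ contains a non-soluble $\SL_k(q^2)$ factor, so only the isotropic-line parabolic can yield a soluble Levi, and its derived group $\SU_{n-1}(q)$ is soluble only when $(n-1,q)\in\{(2,2),(2,3),(3,2)\}$, producing $\PSU_4(2)$, $\PSU_4(3)$, $\PSU_5(2)$. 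In types $B_n$ and $C_n$ the Levis are $\SL_k(q)$ times a smaller classical group of the same type, and working through $k$ and $n$ (invoking $\Omega_7(2)\cong\PSp_6(2)$) gives precisely $\PSp_4(2)'$, $\PSp_4(3)$, $\PSp_6(2)$, $\PSp_6(3)$, $\Omega_7(3)$. For types $D_n$ and ${}^2D_n$ only $n=4$ survives: in $D_4$ the branch-node parabolic has Levi semisimple part $\SL_2(q)^3$ (soluble for $q\le 3$), producing $\Omega_8^+(2)$ and $\Omega_8^+(3)$, while every maximal parabolic of $\Omega_8^-(q)$ still contains a non-soluble Levi factor.

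For the exceptional types: both parabolics of $\G_2(q)$ have Levi $\SL_2(q)$, forcing $q\in\{2,3\}$ and giving $\G_2(2)'$ and $\G_2(3)$; for $\triality(q)$ one parabolic has Levi $\SL_2(q^3)$ (never soluble for $q\ge 2$), but the other has Levi $\SL_2(q)$, giving $\triality(2)$ and $\triality(3)$; for $\BigRee(q)$ the two Levis are (up to a central torus) $\Sz(q)$ and $\SL_2(q)$, which are both soluble exactly when $q=2$, producing $\BigRee(2)'$; while every maximal parabolic of $F_4$, $E_6$, $E_7$, $E_8$ or ${}^2E_6$ contains a rank $\ge 2$ Levi factor and is therefore non-soluble.

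The main obstacle is really the careful bookkeeping under non-trivial diagram automorphisms: one must correctly identify that a pair of twisted nodes removed in ${}^2A_n$ contributes an $\SL_k(q^2)$ factor (not $\SL_k(q)\times \SL_k(q)$), that the triality orbit in $\triality(q)$ contributes an $\SL_2(q^3)$ factor, and that the Levis of $\BigRee(q)$ involve Suzuki groups over $q$. Once these field-of-definition subtleties are pinned down, comparison against the short list of soluble Lie type groups above completes the case analysis and yields exactly the nineteen groups in the statement.
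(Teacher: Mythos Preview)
Your argument is correct, and it takes a genuinely different route from the paper. The paper does not carry out any case analysis at all: it simply extracts the list from \cite[Lemma~5.6]{Burness}, noting that Burness's statement is phrased so as to allow graph automorphisms and therefore also picks up $\PSL_5(2)$, $\PSL_5(3)$, $\PSL_6(2)$, $\PSL_6(3)$ and $\F_4(2)$, which must be removed, and that $\PSU_5(3)$ (whose point-stabiliser Levi has derived group $\SU_3(3)$, non-soluble) does not appear. Your approach instead proves the lemma from first principles via the Levi decomposition $P=U L_J$ and the short list of soluble rank-$1$ groups $\SL_2(2)$, $\SL_2(3)$, $\SU_3(2)$, $\Sz(2)$, followed by a type-by-type inspection of which deleted-node subdiagrams can have all factors on that list.

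Each buys something: the paper's citation is brief and avoids the bookkeeping, while your argument is self-contained and makes transparent exactly why each group on the list arises (and why, for instance, $\PSU_5(3)$ and $\Omega_8^-(q)$ are excluded). Your handling of the twisted cases --- the $\SL_k(q^2)$ factors in ${}^2A_n$, the $\SL_2(q^3)$ Levi in $\triality(q)$, and the $\Sz(q)$ Levi in $\BigRee(q)$ --- is the delicate point, and you have it right. The only places where a reader might want one more line are the exclusion of $D_n$, ${}^2D_n$ for $n\ge 5$ and of $B_n$, $C_n$ for $n\ge 4$, but these follow immediately once one observes that every deleted-node subdiagram then contains an irreducible factor of rank at least $2$ over $\GF(q)$.
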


\begin{proof}
	This can be extracted from \cite[Lemma 5.6]{Burness}. Note that \cite[Lemma 5.6]{Burness} allows graph automorphisms and so includes the additional groups $\PSL_5(2)$, $\PSL_5(3)$, $\PSL_6(2)$, $\PSL_6(3)$ and $\F_4(2)$ that are not itemised in our lemma. In addition, we remark that $\PSU_5(3)$ does not have any soluble maximal parabolic subgroups.
\end{proof}

Our last result of this section exploits the work of Guralnick and Navarro mentioned in the introduction and deals with a special case of our main theorems.

\begin{Thm}\label{thm:GN}
  Suppose that $G$ is a finite group, $N$ is a normal subgroup of $G$ of index $p$ and $\gen{a} \in \Syl_p(G)$ has order $p$. Set $A= a^G$ and assume that every element of $A^2$ is a $p$-element. Then $\gen{A}$ is a Frobenius group. In particular, $[N,a]$ is soluble.
\end{Thm}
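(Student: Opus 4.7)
My strategy is to reduce the condition on $A^2$ to a setting where Guralnick--Navarro gives solubility, and then upgrade that to the Frobenius conclusion via a coprime-action argument.

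First, since $[G:N]=p$ and a Sylow $p$-subgroup of $G$ has order $p$, $N$ is a $p'$-group and $G=N\rtimes\gen{a}$. Because $\gen{a}$ centralises itself, $A=a^G=a^N=\{a[a,n]:n\in N\}$, so $A\subseteq aK$ where $K:=[a,N]$ is a normal $p'$-subgroup of $G$ satisfying $\gen{A}=K\rtimes\gen{a}$. Showing that $\gen{A}$ is a Frobenius group with complement $\gen{a}$ amounts to showing $C_K(a)=1$; the solubility of $[N,a]=K$ will then follow at once from Thompson's theorem, which makes a Frobenius kernel nilpotent.

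The case $p=2$ is disposed of quickly: $a^n a^m\in N$, a $2'$-group, so being a $2$-element it is trivial and $A=\{a\}$. For $p$ odd, I would first show that $A^2$ is a single conjugacy class. Any $bc$ with $b,c\in A$ lies in $a^2 K$ and is a $p$-element by hypothesis, so it generates a Sylow $p$-subgroup of $\gen{A}$; since $|\gen{A}|_p=p$, Sylow's theorem forces $bc$ to be $\gen{A}$-conjugate to $a^2$. Thus $A^2\subseteq(a^2)^{\gen{A}}\subseteq(a^2)^G$, and the reverse containment follows from $a^2\in A^2$ and the $G$-invariance of $A^2$. With $A^2$ a single conjugacy class, Guralnick--Navarro~\cite{GuralnickNavarro} then yields that $\gen{A}$ is soluble.

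The hard part is the Frobenius step. Coprime action on $N$ yields $|A|=|N|/|C_N(a)|=|K|/|C_K(a)|=|a^{\gen{A}}|$, and since $a^{\gen{A}}\subseteq A$ this forces $A=a^{\gen{A}}$, so we may replace $G$ by $H:=\gen{A}$. I would then argue by minimal counterexample: choose an $H$-minimal normal subgroup $M\leq K$, which solubility makes into an elementary abelian $q$-group for some prime $q\ne p$. The hypothesis descends to $H/M$, so by minimality $C_{K/M}(a)=1$, forcing $C_K(a)\leq M$. Coprime action on $M$ gives $M=C_M(a)\oplus [M,a]$, and the remaining task is to rule out $C_M(a)\ne 1$. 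I would attempt this by showing that a nontrivial $z\in C_M(a)$ would allow one to construct a pair $b,c\in A$ with $bc$ having a nontrivial $p'$-part, modelled on the canonical failure $\SL_2(3)$ where $-1\in C_{Q_8}(a)$ produces a product of order $6$, thereby contradicting the hypothesis. This final step is the technical heart and requires careful bookkeeping with the norm map $k\mapsto k\cdot k^a\cdots k^{a^{p-1}}$ on $K$.
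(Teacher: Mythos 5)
Your opening reductions are correct and coincide with the paper's: $N$ is a $p'$-group, $G=N\rtimes\gen{a}$, and the Sylow argument showing $A^2\subseteq (a^2)^G$, hence that $A^2$ is a single conjugacy class, is exactly the paper's first step. The genuine gap is the final step. You reduce the Frobenius claim to showing $C_M(a)=1$ for a minimal normal $q$-subgroup $M$ of $H=\gen{A}$ with $C_K(a)\le M$, but you do not prove it; you only announce an intention to construct $b,c\in A$ with $bc$ not a $p$-element by ``careful bookkeeping with the norm map''. As written the argument stops at what you yourself call the technical heart. Note that this step is not routine: every element of $A^2$ is conjugate to one of the form $a^2[a,n]$ with $n\in N$, and a fixed point $z\in C_M(a)$ satisfies $[a,z]=1$, so the hypothesis does not directly ``see'' $C_M(a)$; some nontrivial argument (or computation, as your $\SL_2(3)$ example suggests) is genuinely required, and none is supplied. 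There is also a small unaddressed point in the minimal-counterexample set-up: $M=C_M(a)\times[M,a]$ is a decomposition into $\gen{a}$-invariant subgroups, not obviously into $H$-invariant ones, so minimality of $M$ does not by itself split the two factors.

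The gap is avoidable, and the paper avoids it by using the full strength of the result you cite. Guralnick--Navarro's Theorem A \cite{GuralnickNavarro} gives, under the hypothesis that $(a^G)^2=(a^2)^G$, not only that $[G,a]=[N,a]$ is soluble but also that $A=a[G,a]$, i.e.\ the class $A$ is the entire coset $a[N,a]$. This finishes the proof by pure counting: $A=a^N$ has $|N|/|C_N(a)|=|[N,a]|/|C_{[N,a]}(a)|$ elements by coprime action, so $|A|=|[N,a]|$ forces $C_{[N,a]}(a)=1$, whence $C_X(a)=\gen{a}$ for $X=\gen{A}=[N,a]\gen{a}$ and $X$ is Frobenius with complement $\gen{a}$ acting on the cosets of $\gen{a}$. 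By quoting only the solubility half of the theorem, you manufactured a hard residual step that the cited result already settles; incorporating the coset conclusion $A=a[G,a]$ would turn your proposal into essentially the paper's proof.
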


\begin{proof}
  Since the Sylow $p$-subgroups of $G$ have order $p$ and $\abs{G:N}=p$, $N$ is a $p'$-group and $G= \gen{a} N> N$. Let $b\in A$. Then
	$ab$ is a $p$-element and $aNbN= a^2N$.
	In particular, as $p$ does not divide $\abs{N}$, Sylow's Theorem gives $g \in G$ such that $ab \in \gen{a^g}$. Since
	$$abN= a^2N= (a^g)^2N,$$
  we have $(a^g)^{-2} ab\in N\cap \gen{a^g}=1$ and so $ab = (a^g)^{2}=(a^2)^g$. In particular, $A^2 \subseteq (a^2)^G$ and we get that $A^2$ is a conjugacy class. Now \cite[Theorem A]{GuralnickNavarro} implies $[G,a]= [N,a]$ is soluble and $A= a[N,a]$. Set $X= \gen{A}= \gen{a}[N,a]$. Then as $A= a[N,a]$, $C_X(a)= \gen{a}$.
  Hence acting on the cosets of $\gen{a}$, $X$ is a Frobenius group with complement $\gen{a}$. This proves the theorem.
\end{proof}

\section{The structure of \texorpdfstring{$\gen{A}$}{<A>} when $G$ is soluble}\label{sec:sol}

We now fix a prime $p$, finite group $G$ and a non-empty normal subset $A$ of elements of order $p$ in $G$ such that every element of $A^2$ is a \(p\)-element. We also assume that $O_p(G)=1$. As a consequence, $p$ is odd, for otherwise we know that $\langle A\rangle$ is a $2$-group by the Baer--Suzuki Theorem and thus $A \subset O_2(G)$ which is impossible as $A$ is non-empty.

\begin{Lem}\label{lem:not abundant}
  Suppose that $G$ is soluble, $O_p(G)=1$ and $a \in A\setminus G'$. If $X=\gen{c, a}$ is elementary abelian of order $p^2$ and $c\in G'$, then $\abs{a^G \cap X}<p$.
\end{Lem}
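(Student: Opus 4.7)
The approach is by contradiction: assume $|a^G \cap X| = p$.

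First, I would identify $a^G \cap X$ explicitly. Since $G/G'$ is abelian, $a^G \subseteq aG'$. Because $a \notin G'$ and $\gen a$ has order $p$, we have $\gen a \cap G' = 1$; so writing a general element of $X$ as $a^ic^j$, it lies in $aG'$ precisely when $i \equiv 1 \pmod p$. Hence $aG' \cap X = a\gen c$, a set of exactly $p$ elements, and the assumption forces equality $a^G \cap X = a\gen c$. In particular each $ac^i$ for $0 \le i < p$ is a $G$-conjugate of $a$ and therefore lies in $A$.

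Next, I would apply \cref{thm:GN} to a carefully chosen subgroup of $G$. Because $G$ is soluble with $O_p(G) = 1$, the Fitting subgroup equals $K := O_{p'}(G)$, which is non-trivial and self-centralizing in $G$. Set $H := K\gen a$; then $K$ is a normal $p$-complement in $H$ and $\gen a \in \Syl_p(H)$. The class $a^H$ lies inside $A$, so $(a^H)^2 \subseteq A^2$ consists of $p$-elements, and \cref{thm:GN} gives that $\gen{a^H}$ is a Frobenius group with complement $\gen a$ and kernel $[K,a] \le K$. A key observation is that $a^H \cap X = \{a\}$: if $a^k \in X$ for some $k \in K$, then the commutator $a^{-1}a^k = [a,k^{-1}]$ lies in $K$ (since $K$ is normal in $G$), hence in $K \cap X = 1$, forcing $a^k = a$.

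The final step is the main obstacle. Since the extra conjugates $ac, ac^2, \ldots, ac^{p-1}$ of $a$ in $X$ cannot arise from $H$-conjugation, there must be witnesses $g_i \in G \setminus H\cdot C_G(a)$ with $a^{g_i} = ac^i$. The plan is to combine such a $g_i$ with the Frobenius structure on $\gen{a^H}$ to produce an element of $A \cdot A$ which is not a $p$-element. Concretely, one analyses the action of $g_i$ on the normal subgroup $K$, uses that $c$ itself acts non-trivially on $K$ (because $c$ is a $p$-element with $c \notin K$, while $C_G(K) \le K$), and exploits the identity $[a^{-1}, g_i] = c^i$ to show that some suitable product $b_1 \cdot b_2$ with $b_1, b_2 \in A$ acquires a non-trivial component in the Frobenius kernel $[K,a]$ alongside its $X$-component. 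The delicacy lies in ensuring both factors are genuinely in $A$ (not merely in $A \cup A^{-1}$) and in tracking the $p'$-contribution from the $K$-part of the conjugating element precisely enough that it cannot cancel against the elementary abelian $X$-part, thereby contradicting the hypothesis that every element of $A^2$ is a $p$-element.
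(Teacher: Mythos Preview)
Your setup in the first two paragraphs is correct and matches the paper: one does get $a^G\cap X=a\gen{c}=\{ac^j\mid 0\le j\le p-1\}$, and the Fitting subgroup $K=O_{p'}(G)$ is self-centralizing. However, the final paragraph is not a proof but an outline of a hoped-for argument, and as stated it does not close. Passing to $H=K\gen{a}$ and invoking \cref{thm:GN} only tells you that $a$ acts fixed-point-freely on $[K,a]$; it gives no direct leverage on how the other elements $ac,ac^2,\dots$ of $A\cap X$ interact with $K$, and the vague plan to ``combine a witness $g_i$ with the Frobenius structure'' never specifies which product $b_1b_2\in A^2$ is supposed to fail to be a $p$-element or why. (Incidentally, from $a^{g_i}=ac^i$ one has $[a,g_i]=c^i$, not $[a^{-1},g_i]=c^i$.) There is no mechanism here forcing a non-trivial $p'$-part to survive.

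The paper's argument is quite different and fully explicit. It exploits coprime action of $X$ on $K$: one has $K=\gen{C_K(y)\mid y\in X^\#}$ and $|C_K(x)|=|C_K(a)|$ for every $x\in A\cap X$, which already rules out $C_K(a)=1$ (else $K=C_K(c)$, impossible). The key step is then to pick $t\in C_K(a^2c)^\#$ and compute directly
\[
(a\cdot(ac)^t)^p=(a^2c\,[ac,t])^p=(a^2c)^p[ac,t]^p=[ac,t]^p,
\]
using that $[ac,t]\in C_K(a^2c)$ commutes with $a^2c$. Since $a\cdot(ac)^t\in A^2$ is a $p$-element and $[ac,t]\in K$ is a $p'$-element, this forces $[ac,t]=1$, whence $C_K(a^2c)\le C_K(ac)$ is centralized by $\gen{ac,a^2c}=X$. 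Equality of centralizer orders then gives $C_K(x)=C_K(X)\le C_K(c)$ for all $x\in A\cap X$, and the coprime generation formula yields $c\in C_G(K)\le K$, a contradiction. This concrete product $a\cdot(ac)^t$ is exactly the missing ingredient in your sketch.
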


\begin{proof}
  Set $Q=O_{p'}(G)$. Then $F(G)\le Q$ and $C_G(Q) \le Q$. We may as well assume that $A= a^G$. Suppose that $\abs{A \cap X} \ge p$. Since every conjugate of $a$ lies in $aG'$ and $c \in G'$, $$A\cap X= \{ac^j\mid 0\le j \le p-1\}$$ and $$X=\gen{c} \cup\bigcup_{x\in A\cap X} \gen{x}.$$
  Since $\abs{C_{Q}(a)}= \abs{C_{Q^g}(a^g)}= \abs{C_Q(a^g)}$ for all $g\in G$, $$\abs{C_Q(a)}=\abs{C_{Q}(x)}$$ for all $x \in A\cap X$.
  Because $Q= \gen{C_Q(y) \mid y \in X^\#}$ by \cite[Theorem 5.3.16]{Gorenstein} and the cyclic groups generated by elements of $X\cap A$ are all different, we observe that, if $C_Q(a)=1$, we have $Q= C_Q(c)$ which is impossible as $C_G(Q) \le Q$ and $Q$ is a $p'$-group. Hence $C_Q(x)\ne 1$ for all $x \in A\cap X$.
  Notice that $$\gen{a^2c} = \gen{(a^2c)^{(p+1)/2}} = \gen{ac^{(p+1)/2}}$$ and so $C_{Q}( a^2c)\ne 1$.
  Pick $t \in C_{Q}( a^2c)^\#$. As $X$ centralizes $a^2c$ and normalizes $Q$, $X$ normalizes $C_{Q}( a^2c)$. Therefore
  $$[ac,t] \in [ac,C_{Q}( a^2c)]\le C_{Q}( a^2c)$$
  and thus
  \begin{eqnarray*}
    (a (ac)^t)^p  &=& (a^2cc^{-1}a^{-1}(ac)^t)^p = (a^2c[ac,t])^p\\
                  &=& (a^2c)^p[ac,t]^p = [ac,t]^p.
  \end{eqnarray*}
  Because $a(ac)^t\in A^2$ and $Q$ is a $p'$-group, we deduce that $[ac,t]=1$. Consequently, $C_Q(a^2c)\le C_Q(ac)$. It follows that $C_Q(a^2c)$ is centralized by $\gen{ac,a^2c}= X$. As $\abs{C_{Q}(a)}= \abs{C_{Q}(x)} $ for all $x \in A\cap X$, we obtain, for all $x\in A \cap X$, $$C_Q(a)= C_Q(x)=C_Q(X) \le C_Q(c).$$ Since $Q= \gen{C_Q(y) \mid y\in X^\#}$ and $X=\gen{c} \cup\bigcup_{x\in A\cap X} \gen{x}$, again we conclude that $c \in C_G(Q)\le Q$, contrary to $Q$ being a $p'$-group. This contradiction proves the lemma.
\end{proof}

\begin{Lem}\label{lem:class generated}
  Suppose that $A$ is a conjugacy class of $G$, $G=\gen{A}$ is soluble and $O_p(G)=1$. Then $G $ is a Frobenius group, and, for $a \in A$, $\gen{a}$ is a Frobenius complement.
\end{Lem}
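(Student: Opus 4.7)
The plan is to reduce the problem to \cref{thm:GN} by showing the Sylow $p$-subgroup of $G$ has order exactly $p$. Since $G$ is soluble with $O_p(G)=1$, the Fitting subgroup satisfies $Q = F(G) = O_{p'}(G)$ and $C_G(Q)\le Q$; choose a Sylow $p$-subgroup $P$ so that $G = PQ$. Because $a$ has order $p$ and $Q$ is a $p'$-group, $a\notin Q$, so the image $\bar a$ in the quotient $\bar G = G/Q \cong P$ is nontrivial and $\bar G$ is a nontrivial $p$-group.

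To force $|P|=p$, I would exploit the fact that $\bar A = \bar a^{\bar G}$ is a conjugacy class in the $p$-group $\bar G$. In any $p$-group, the conjugates of an element lie in a single coset of the derived subgroup, so $\bar A \subseteq \bar a\,\bar G'$. Since $\bar G = \langle \bar A\rangle$, it follows that $\bar G = \langle \bar a\rangle \bar G'$, and therefore $\bar G/\bar G'$ is generated by the image of $\bar a$, hence cyclic of order dividing $p$. Moreover $\bar a \notin \bar G'$, for otherwise $\bar A \subseteq \bar G'$ would force the nontrivial $p$-group $\bar G$ to coincide with its derived subgroup. So $\bar G/\bar G'$ is cyclic of order exactly $p$, and the Burnside basis theorem then yields that $\bar G$ is cyclic; being a cyclic $p$-group with derived quotient of order $p$, $\bar G$ itself has order $p$, so $|P|=p$.

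With $|P|=p$, the hypotheses of \cref{thm:GN} are satisfied with $N = Q$ and $\langle a\rangle \in \Syl_p(G)$: $Q$ is normal of index $p$ and every element of $A^2$ is a $p$-element by assumption. That theorem then gives $\langle A\rangle$ a Frobenius group with complement $\langle a\rangle$, and since $G = \langle A\rangle$ by hypothesis, $G$ itself is Frobenius with complement $\langle a\rangle$. Because $A$ is a conjugacy class, all $\langle a'\rangle$ for $a'\in A$ are conjugate and hence all serve as Frobenius complements, giving the stated conclusion.

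The entire proof is essentially a reduction, and the only substantive step is extracting cyclicity of $P$ from the fact that its generating conjugacy class $\bar A$ lies in a single coset of $P'$; this is handled cleanly by the Burnside basis theorem, after which \cref{thm:GN} does the rest. I do not foresee any technical obstruction beyond this reduction.
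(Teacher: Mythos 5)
The proposal contains a genuine gap at its first structural step. With $Q=O_{p'}(G)=F(G)$ you assert that one may ``choose a Sylow $p$-subgroup $P$ so that $G=PQ$'', i.e.\ that $G/O_{p'}(G)$ is a $p$-group. None of the hypotheses you actually invoke at that point (solubility, $O_p(G)=1$, $G=\gen{A}$ for a class of order-$p$ elements) implies this. For instance, let $H=\GF(3)^2\rtimes \SL_2(3)$ with the natural action; then $H$ is soluble, is generated by the class of an element of order $3$ lying outside $H'=3^2{:}Q_8$, and has no non-trivial normal $3'$-subgroup. Taking $G=V\rtimes H$ for a faithful irreducible $H$-module $V$ of coprime order (such a $V$ exists since $\operatorname{Soc}(H)=3^2$ is generated by one conjugacy class) gives a soluble group with $O_3(G)=1$, generated by a class of elements of order $3$, for which $O_{3'}(G)=V$ and $G/O_{3'}(G)\cong H$ is not a $3$-group. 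Such a $G$ of course violates the hypothesis that $A^2$ consists of $p$-elements---but that is exactly the point: deducing $\abs{G:O_{p'}(G)}=p$ from that hypothesis is essentially the whole content of the lemma (a Frobenius group with complement $\gen{a}$ of order $p$ has nilpotent kernel equal to $O_{p'}(G)$ of index $p$, and conversely \cref{thm:GN} handles that case immediately). So your argument assumes what is to be proved, and the coset-of-$G'$/Burnside-basis observation and the appeal to \cref{thm:GN} only dispose of the case $\abs{G:F(G)}=p$, which is also precisely where the paper's proof starts.

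What is missing is the case $\abs{G:F(G)}>p$, which is the real work in the paper: an induction on $\abs{G}$ showing that $G/Q$ (for $Q$ the preimage of $O_p(G/F)$) is a Frobenius group with complement of order $p$; an analysis, via \cref{lem:is abundant,lem:not abundant,lem:generation abelian}, of how $\ov a$ acts on elementary abelian subgroups of a Sylow $p$-subgroup of $G/F$, forcing $[\ov Q,\ov a,\ov a]=1$; and finally a $p$-stability argument showing that $\SL_2(3)$ would have to be involved in a Frobenius group with complement of order $3$, which is impossible. None of this is captured by the reduction you propose, so the proof as written does not establish the lemma.
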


\begin{proof}
  Fix $a \in A$ and set $F= O_{p'}(G)$. Then $F < G$ and, as $O_p(G)=1$, $F \ne 1$. By \cref{thm:GN}, we may assume that $\abs{G:F} > p$.

  Assume that $G$ is a counterexample of minimal order and use the bar notation $\ov G= G/F$. Denote by $\ov Q= O_p(\ov G)$ (so the full preimage of $\ov Q$ is denoted by $Q$). Then the minimality of $G$ and the fact that $\abs{\ov G} \ne p$ implies $\ov Q\ne 1$ and $ G / Q $ is a Frobenius group with complement $\gen{ {a}}Q/Q$.
  Let $L \ge Q$ be such that $L/Q= O_{p'}(G/Q)$. As $G/Q$ is a Frobenius group with complement $\gen{a}Q/Q$, $L/Q= F(G/Q)$ has index $p$ in $G$ and is the Frobenius kernel of $G/Q$. Fix a Sylow $p$-subgroup $\ov P$ of $\ov G$ containing $\ov a$. Then $\ov Q $ has index $p$ in $\ov P$ and $\ov P = \gen{\ov a}\ov Q$.

 \cref{lem:is abundant,lem:not abundant} together imply that $\ov a$ centralizes every elementary abelian subgroup of $\ov{P}$ which it normalizes.
  By \cref{lem:generation abelian}, setting $\ov X=\gen{\ov{a}^{\ov{G}}\cap \ov{P} }$, we have that $\ov X$ is an elementary abelian normal subgroup of $\ov {P}$. Hence $$[\ov{Q},\ov a, \ov a] \le [\ov Q,\ov X,\ov X] \le [\ov X,\ov X]=1.$$
  In particular, $\ov G/C_{\ov G}(\ov Q)$ is not $p$-stable \cite[Definition 25.3]{GLS2}.
  Therefore \cite[Proposition 25.6]{GLS2} yields $\SL_2(3)$ is involved in $\ov G/C_{\ov{G}}(\ov Q)$ and hence also in the Frobenius group $G/Q$. In particular, $\ov a$ has order $p=3$. Since $\abs{Z(\SL_2(3))}=2$ and $G/Q$ is a Frobenius group, on the one hand $\gen{a}Q/Q$ acts fixed-point-freely on $L/Q$ whereas, on the other, it must have centralizer of order at least $2$. This contradiction proves the lemma.
\end{proof}

\section{The reduction of \cref{thm:main} to simple groups}	\label{sec:reduction}

For the proof of \cref{thm:main}, assume that $G$ is a counterexample with $\abs{G}+\abs{A}$ minimal.

\begin{Lem}\label{lem:basic1}
	If $H$ is a proper subgroup of $G$ and $A \cap H \not=\emptyset$, then $\gen{A \cap H}$ is a normal soluble subgroup of $H$. In particular, $G=\gen{A}$.
\end{Lem}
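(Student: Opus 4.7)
The plan is a direct appeal to the minimality of the counterexample $G$. First I would check that the triple $(H, A\cap H, p)$ satisfies the hypotheses of \cref{thm:main}. Set $B = A \cap H$. Since every element of $A$ has order $p$, the same is true of $B$, and $B^2 \subseteq A^2$ consists of $p$-elements by hypothesis. The one point that needs verification is that $B$ is a normal subset of $H$: if $a \in B$ and $h \in H$, then $a^h \in A$ because $A$ is a normal subset of $G$, and $a^h \in H$ because $a,h \in H$, so $a^h \in B$.

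Once this is established, the inequality $|H| + |B| < |G| + |A|$ holds because $|H| < |G|$ and $|B| \le |A|$. By the minimality of the counterexample $(G,A)$, the triple $(H,B)$ is not a counterexample, and hence $\gen{B} = \gen{A \cap H}$ is a soluble normal subgroup of $H$, as required.

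For the ``In particular'' clause, suppose for contradiction that $\gen{A} \ne G$. Apply the first part with $H = \gen{A}$; this yields that $\gen{A \cap H} = \gen{A}$ is soluble. However, $\gen{A}$ is automatically normal in $G$ because $A$ is a normal subset of $G$, so $\gen{A}$ is a soluble normal subgroup of $G$. This contradicts the fact that $G$ is a counterexample to \cref{thm:main}, and thus $G = \gen{A}$.

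The argument is essentially bookkeeping around the minimality hypothesis; there is no serious obstacle. The only care needed is the verification that $A\cap H$ inherits normality in $H$ from normality of $A$ in $G$, and the observation that $\gen{A}$ is automatically a normal subgroup of $G$ so that solubility of $\gen{A}$ alone suffices to contradict the choice of $(G,A)$.
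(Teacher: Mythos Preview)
Your argument is correct and is exactly the paper's approach: apply the minimality hypothesis to the pair $(H, A\cap H)$, noting $\abs{H}+\abs{A\cap H}<\abs{G}+\abs{A}$, and then take $H=\gen{A}$ for the ``in particular'' clause. You spell out the verification that $A\cap H$ is a normal subset of $H$ more carefully than the paper does, but otherwise the proofs are identical.
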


\begin{proof}
	We have $\abs{H}+\abs{A\cap H} < \abs{G}+ \abs{A}$ and so $\gen{A \cap H}$ is a normal soluble subgroup of $H$ by our inductive hypothesis. If $\gen{A} \ne G$, we may take $H=\gen{A}$ and deduce that $\gen{A}$ is a soluble subgroup, which, as $A$ is a normal subset of $G$, is normal in $G$.
\end{proof}

\begin{Lem}	\label{lem:lem3}
	If $N$ is a non-trivial proper normal subgroup of $G$, then $G/N$ is soluble. In particular, $N$ is not soluble.
\end{Lem}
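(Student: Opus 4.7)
The plan is to push the hypothesis of \cref{thm:main} down to the quotient $G/N$ and invoke minimality. First I would use \cref{lem:basic1} to recall that $G=\gen{A}$. Then, writing $\ov G = G/N$ and letting $\ov A = \{\ov a \mid a\in A,\ a\notin N\}$, I would check three things: each element of $\ov A$ has order $p$ (because $p$ is prime and $a\notin N$), $\ov A$ is a normal subset of $\ov G$ (inherited from normality of $A$ in $G$), and every element of $\ov A^{\,2}$ is a $p$-element (direct image of a $p$-element in $A^2$). Moreover $\gen{\ov A} = \ov G$, because $\ov G$ is generated by the images of $A$ and the trivial image contributes nothing to the generating set.

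Next I would rule out $\ov A = \emptyset$: if every element of $A$ lay in $N$, then $G=\gen{A}\le N$, contradicting the assumption that $N$ is a proper subgroup. So $\ov A$ is a non-empty normal set of order-$p$ elements of $\ov G$ whose square consists of $p$-elements. The central step is the bookkeeping with the minimality parameter $|G|+|A|$: since $N\ne 1$ we have $|\ov G|<|G|$, and since the map $A\mapsto\ov A$ cannot increase cardinality we have
\[
|\ov G| + |\ov A| \;<\; |G| + |A|.
\]
Hence $\ov G$ cannot itself be a counterexample to \cref{thm:main}, so $\gen{\ov A}$ is soluble. But $\gen{\ov A}=\ov G$, giving that $G/N$ is soluble.

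The ``in particular'' statement is then immediate: if $N$ were soluble, so would be the extension $G$ of $N$ by the soluble group $G/N$, contradicting that $G$ is a counterexample (recall $G=\gen{A}$ is, by assumption, non-soluble). I do not expect any real obstacle here; the only subtle point is to make sure the image set $\ov A$ is defined so that its elements all have order exactly $p$ (by discarding the identity images), and to confirm that this discarding does not shrink the group generated, so that the minimality argument applied to $(\ov G,\ov A)$ forces solubility of $\ov G$ itself rather than merely of some proper subgroup.
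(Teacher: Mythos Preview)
Your argument is correct and follows essentially the same route as the paper: pass to $G/N$, observe that the image of $A$ is a normal set whose square consists of $p$-elements, and invoke minimality of $\abs{G}+\abs{A}$ together with $G=\gen{A}$ from \cref{lem:basic1}. The paper simply sets $A/N=\{aN\mid a\in A\}$ without explicitly discarding the trivial coset; your extra care in removing identity images so that every element of $\ov A$ genuinely has order $p$ is a harmless refinement that makes the induction hypothesis cleanly applicable, and your verification that $\ov A\ne\emptyset$ (else $G=\gen{A}\le N$) fills in what the paper leaves implicit.
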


\begin{proof}
	We have $A/N= \{aN\mid a\in A \}$ is a union of conjugacy classes in $G/N$. Moreover, the product of any $aN$, $bN \in A/N$ is a \(p\)-element by hypothesis. Hence, by \cref{lem:basic1}, $G/N = \gen{A/N}$ and, as $\abs{G/N}+ \abs{A/N} < \abs{G}+\abs{A}$, $G/N$ is soluble by induction.
\end{proof}

\begin{Lem}\label{lem:lem2}
	$A$ is a conjugacy class.
\end{Lem}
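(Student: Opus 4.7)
The strategy is a direct minimal-counterexample argument exploiting the inductive assumption on $|G|+|A|$. The plan is to suppose for contradiction that $A$ decomposes as a disjoint union of at least two $G$-conjugacy classes $A_1,\dots,A_k$ (with $k\ge 2$), and extract a soluble normal subgroup of $G$ covering $A$, contradicting that $G$ is a non-soluble counterexample.

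First I would observe that, for each $i$, the pair $(G,A_i)$ still satisfies the hypothesis of \cref{thm:main}: $A_i$ is a single conjugacy class, hence a (non-empty) normal subset of $G$ consisting of elements of order $p$, and
\[
A_i^2 \subseteq A^2
\]
consists of $p$-elements. Because $|G|+|A_i| < |G|+|A|$, the minimality of the counterexample $(G,A)$ means that the conclusion of \cref{thm:main} applies to $(G,A_i)$, so $N_i := \gen{A_i}$ is a soluble normal subgroup of $G$.

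Next, I would form the product $N := N_1 N_2 \cdots N_k$. Each $N_i$ is normal in $G$, so $N$ is normal in $G$; and the product of finitely many normal soluble subgroups is soluble (since $N_1 N_2 / N_2 \cong N_1/(N_1\cap N_2)$ is soluble, and one proceeds by induction on $k$). Thus $N$ is a soluble normal subgroup of $G$. By construction $A = \bigcup_i A_i \subseteq N$, so
\[
G = \gen{A} \le N,
\]
where $G=\gen{A}$ comes from \cref{lem:basic1}. Hence $N = G$ and $G$ is soluble, contradicting the fact that $G$ is a counterexample to \cref{thm:main}. Therefore $A$ must consist of a single conjugacy class.

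There is no significant obstacle in this argument; the only point worth verifying carefully is that each individual class $A_i \subseteq A$ genuinely inherits the hypothesis (its square lies in $A^2$, so is automatically a set of $p$-elements) and that $|A_i| < |A|$, which requires $k \ge 2$, i.e.\ that $A$ is not already a conjugacy class — precisely the case we are assuming for contradiction.
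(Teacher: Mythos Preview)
Your argument is correct and is essentially the same as the paper's: the paper splits $A$ into two non-empty normal subsets $B$ and $C$ (rather than all the way into individual conjugacy classes), applies the inductive hypothesis to each to get that $\gen{B}$ and $\gen{C}$ are soluble normal subgroups, and concludes that $\gen{A}=\gen{B}\gen{C}$ is soluble, a contradiction. Your finer decomposition into classes is unnecessary but harmless.
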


\begin{proof}
	If $A= B \cup C$ with $B$ and $C$ non-empty normal subsets of $G$.
	Then, as $\abs{B}+\abs{G} <\abs{A}+\abs{G}$ and $\abs{C}+ \abs{G}< \abs{A}+\abs{G}$, $\gen{B}$ and $\gen{C}$ are soluble normal subgroups of $G$ by the inductive hypothesis. Hence $\gen{B} \gen{C}=\gen{A}$ is soluble and normal in $G$, a contradiction.
\end{proof}

\begin{Lem}\label{lem:p odd}
	We have $p$ is odd.
\end{Lem}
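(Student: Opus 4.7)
The plan is to derive a contradiction from the assumption $p=2$ by invoking the Baer--Suzuki Theorem. Under the minimal counterexample setup, Lemma \ref{lem:basic1} gives $G = \gen{A}$, Lemma \ref{lem:lem2} says $A$ is a single conjugacy class, and by choice $G$ is not soluble. Assume toward a contradiction that $p = 2$, so every element of $A$ is an involution.

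The key observation is the standard fact that any two involutions $a, b \in G$ generate a dihedral subgroup of order $2\abs{ab}$. By hypothesis $ab \in A^2$ is a $2$-element, so $\gen{a,b}$ is a finite dihedral $2$-group, in particular nilpotent. Hence any two elements of the conjugacy class $A$ generate a nilpotent subgroup, and the Baer--Suzuki Theorem applies to conclude that $\gen{A}$ is a normal nilpotent subgroup of $G$. Since nilpotent groups are soluble, this contradicts the fact that $G = \gen{A}$ is a counterexample to \cref{thm:main}. Therefore $p$ is odd.

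There is essentially no obstacle: the lemma is an immediate reformulation of the Baer--Suzuki Theorem in the $p=2$ case, exactly as foreshadowed in the introduction's discussion of why the substantive content of \cref{thm:A} concerns elements of odd order.
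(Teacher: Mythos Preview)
Your proof is correct and follows essentially the same approach as the paper: assume $p=2$, observe that any two involutions in $A$ generate a dihedral $2$-group (hence nilpotent), apply Baer--Suzuki to conclude $\gen{A}$ is nilpotent, and contradict $G=\gen{A}$ being a non-soluble counterexample. Your invocation of \cref{lem:lem2} is a harmless extra precaution but not strictly needed, since Baer--Suzuki can be applied to each conjugacy class contained in $A$ separately.
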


\begin{proof}
  Suppose that $p=2$. Then for $x$, $y \in A$, $\gen{x,y}$ is a dihedral $2$-group. Hence, $\gen{A} \le F(G)$ by the Baer--Suzuki Theorem \cite[\nopp 39.6]{Aschbacher}. Since $G=\gen{A}$ by \cref{lem:basic1}, this is a contradiction. Hence $p$ is odd.
\end{proof}

\begin{Lem}	\label{lem:lem4}
	The group $G$ has a unique minimal normal subgroup $N$ and $C_G(N)=1$.
\end{Lem}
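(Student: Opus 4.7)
The plan has two parts: establishing the uniqueness of the minimal normal subgroup, and then deducing $C_G(N) = 1$. Both parts hinge on \cref{lem:lem3}, which tells us that every non-trivial proper normal subgroup of $G$ must be non-soluble (and every proper quotient soluble).

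For uniqueness, I would argue by contradiction: suppose $M_1$ and $M_2$ are two distinct minimal normal subgroups of $G$. Then $M_1 \cap M_2$ is a proper normal subgroup of each, so by minimality it equals $1$, giving $M_1 \cong M_1M_2/M_2 \hookrightarrow G/M_2$. Now $M_2$ must be a proper subgroup of $G$ (otherwise $G$ itself would be the unique minimal normal subgroup, contradicting the existence of $M_1 \ne M_2$), so \cref{lem:lem3} forces $G/M_2$ to be soluble. Then $M_1$ embeds into a soluble group and is itself soluble, contradicting \cref{lem:lem3} applied to the proper non-trivial normal subgroup $M_1$. Hence $G$ has a unique minimal normal subgroup $N$.

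For the centralizer statement, note that $C_G(N) \trianglelefteq G$. Suppose $C_G(N) \ne 1$; then $C_G(N)$ contains some minimal normal subgroup of $G$, which by the uniqueness just proved must be $N$ itself. So $N \le C_G(N)$, meaning $N$ is abelian. If $N < G$, then $N$ is abelian and hence soluble, directly contradicting \cref{lem:lem3}. The remaining possibility is $N = G$, i.e., $G$ is simple; but then $G$ abelian makes $G$ cyclic of prime order and thus soluble, contrary to $G$ being a counterexample to \cref{thm:main}. Either way we obtain a contradiction, so $C_G(N) = 1$.

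There is no substantive obstacle here: the argument is a straightforward application of the minimality of the counterexample packaged in \cref{lem:lem3}, combined with the standard fact that any non-trivial normal subgroup contains a minimal one. The only thing to watch is the edge case $N = G$, which is handled by observing that a simple counterexample cannot be abelian.
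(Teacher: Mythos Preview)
Your argument is correct and follows essentially the same route as the paper: both parts rest on \cref{lem:lem3}, using that distinct minimal normal subgroups intersect trivially to embed one into a soluble quotient, and then observing that $C_G(N)\ne 1$ forces $N\le C_G(N)$ and hence $N$ abelian. Your treatment is in fact slightly more careful than the paper's, which invokes \cref{lem:lem3} for the contradiction without explicitly separating out the degenerate case $N=G$; you handle that case directly by noting a simple abelian group is soluble.
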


\begin{proof}
	Suppose that $N$ and $M$ are distinct minimal normal subgroups of $G$. Then $N$ and $M$ are proper subgroups of $G$ and neither $N$ nor $M$ is soluble by \cref{lem:lem3}. However $M \cong M/N\cap M \cong MN/N\le G/N$ which is soluble by \cref{lem:lem3}, a contradiction. Hence $G$ has a unique minimal normal subgroup. Let $N$ be the unique minimal normal subgroup of $G$. Since $C_G(N)$ is a normal subgroup of $G$, and $N$ is the unique minimal normal subgroup of $G$, if $C_G(N)\ne 1$, $N \le C_G(N)$ and this contradicts \cref{lem:lem3}.
\end{proof}

From now on, we fix the minimal normal subgroup $N$ of $G$. By \cref{lem:lem3} we know that $N$ is a direct product of isomorphic non-abelian simple groups.

\begin{Lem}\label{lem:lem5}
	Let $a \in A$. Then $G= \gen{a} N$ and $G/N$ is cyclic of order dividing $p$. Furthermore, $p$ divides $\abs{N}$.
\end{Lem}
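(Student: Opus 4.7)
The plan is to prove the three assertions in turn, with the cyclicity claim following immediately from $G=\langle a\rangle N$.

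First I would establish $G=\langle a\rangle N$ by contradiction. Set $H:=\langle a\rangle N$; this is a subgroup since $N$ is normal in $G$. Assume that $H$ is a proper subgroup of $G$. The set $A\cap H$ is non-empty (it contains $a$) and $H$-invariant, so \cref{lem:basic1} gives that $M:=\langle A\cap H\rangle$ is a normal soluble subgroup of $H$. Since $M$ and $N$ are both normal in $H$, we have $[M,N]\le M\cap N$. But $M\cap N$ is a soluble normal subgroup of $N$, and by \cref{lem:lem3} $N$ is a direct product of non-abelian simple groups (whose only soluble normal subgroup is trivial), so $M\cap N=1$. Hence $M\le C_G(N)=1$ by \cref{lem:lem4}, contradicting $a\in M$. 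This gives $G=\langle a\rangle N$, and then $G/N=\langle aN\rangle$ is cyclic of order dividing $|a|=p$.

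For the divisibility claim $p\mid|N|$: if $G=N$ then $a\in N$ has order $p$, giving $p\mid|N|$. Otherwise $|G:N|=p$. Suppose for contradiction that $p\nmid|N|$. Then $|G|_p=p$, so $\langle a\rangle\in\Syl_p(G)$ and \cref{thm:GN} applies, yielding that $[N,a]$ is soluble. Since $[N,a]$ is a normal subgroup of $\langle a\rangle N=G$ contained in the minimal normal subgroup $N$, we have $[N,a]\in\{1,N\}$. If $[N,a]=1$ then $a\in C_G(N)=1$ by \cref{lem:lem4}, contradicting $|a|=p$; if $[N,a]=N$ then $N$ is soluble, contradicting \cref{lem:lem3}. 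In either case we obtain a contradiction, so $p$ divides $|N|$.

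The main obstacle I anticipate is the pivotal intersection argument in the first paragraph: recognising that considering the subgroup $H=\langle a\rangle N$ and the restricted normal subset $A\cap H$ allows one to apply the minimal counterexample hypothesis (via \cref{lem:basic1}), and then combining $[M,N]\le M\cap N$ with the structure of $N$ (no non-trivial soluble normal subgroup) and $C_G(N)=1$ to rule out $M\ne 1$. Once this is in place, the divisibility claim is short: it just requires invoking \cref{thm:GN} in the case where $\langle a\rangle$ is a full Sylow $p$-subgroup and using the minimality of $N$ to eliminate both possibilities for $[N,a]$.
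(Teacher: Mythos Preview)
Your proof is correct and follows essentially the same approach as the paper. The argument for $G=\langle a\rangle N$ is identical: both set $H=\langle a\rangle N$, apply \cref{lem:basic1} to get $\langle A\cap H\rangle$ soluble and normal in $H$, observe that its intersection with $N$ vanishes (since $N$ has no non-trivial soluble normal subgroup), and conclude via $C_G(N)=1$.

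For the divisibility claim, both invoke \cref{thm:GN} when $p\nmid|N|$; the paper simply notes that then $\langle A\rangle$ is soluble (contradicting that $G$ is a counterexample), whereas you take the slightly longer route of using the solubility of $[N,a]$ together with the minimality of $N$. Either way works; your version is a minor elaboration of the same idea. One small point: when applying \cref{thm:GN} you are implicitly using \cref{lem:lem2} to know that $A=a^G$, which is harmless since that lemma is already in place.
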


\begin{proof}
	Let $H= \gen{a} N$ and assume that $H<G$. Then $ \gen{A \cap H}$ is soluble and normal in $H$ by \cref{lem:basic1}. But then, by \cref{lem:lem3},
	$$[\gen{A \cap H},N] \le \gen{A \cap H}\cap N = 1.$$
	But then $\gen{A \cap H} \le C_G(N)=1$ by \cref{lem:lem4}, a contradiction. Hence $G=H$ and $G/N$ is cyclic of order dividing $p$. Finally, \cref{thm:GN} implies that $p$ divides $\abs{N}$.
\end{proof}
%
%
%
%

Assume that $N$ is not simple. Then $G$ permutes the components of $N$ transitively. By \cref{lem:lem5} we have $\abs{G/N}=p$ and write $N= N_1\times \dots\times N_{p}$ where $N_1, \dots, N_{p}$ are non-abelian simple groups.

\begin{Lem}\label{lem:simp}
  $N$ is a non-abelian simple group.
\end{Lem}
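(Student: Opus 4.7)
The plan is to derive a contradiction from the assumption that $N$ is not simple by exhibiting, inside $G$, a wreath product of the form covered by \cref{lem:dih calc}, applied with $a$ itself as the distinguished element of order $p$.

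First I would analyse how $a$ acts on the components. By \cref{lem:lem5} we have $G=\gen{a}N$ and $\abs{G/N}\in\{1,p\}$. If $\abs{G/N}=1$ then $G=N=N_1\times\cdots\times N_k$, and for $k\ge 2$ the quotient $G/N_1\cong N_2\times\cdots\times N_k$ would be a non-soluble proper quotient of $G$, contradicting \cref{lem:lem3}; so in that case $N$ is already simple. Thus we may assume $\abs{G/N}=p$ and (since $G$ permutes the $N_i$ transitively and $G/N$ is cyclic of order $p$) that the number of components equals $p$ and $a$ cyclically permutes $N_1,\ldots,N_p$. In particular $a\notin N$ and the action of $\gen{a}$ on the set of components is regular.

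Next I would invoke \cref{prop:invert stuff}: since $p$ is odd (\cref{lem:p odd}) and each $N_i$ is a non-abelian finite simple group, $N_1$ contains a non-abelian $p'$-subgroup $H$. Let $H_i=H^{a^{i-1}}\le N_i$ for $i=1,\ldots,p$, and set
\[
W=\gen{H,a}=\bigl(H_1\times H_2\times\cdots\times H_p\bigr)\gen{a}\le G.
\]
Because $a$ cyclically permutes the components and the $H_i$ are isomorphic copies of the non-abelian $p'$-group $H$, this is a wreath product $W\cong H\wr C_p$ with $C_p=\gen{a}$ and $p\nmid\abs{H}$.

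Now $a\in W$ is an element of order $p$ by \cref{lem:p odd}, so \cref{lem:dih calc} supplies $w\in W\le G$ with $a^wa$ not a $p$-element. On the other hand, by \cref{lem:lem2} $A=a^G$ is a single conjugacy class, whence $a^w\in A$ and $a^wa\in A^2$. Our standing hypothesis forces every element of $A^2$ to be a $p$-element, a contradiction. Hence $N$ must be simple. The only delicate point is verifying that the action of $a$ really yields a genuine wreath product (i.e.\ that it cyclically permutes exactly $p$ factors), but this is forced by \cref{lem:lem5} together with transitivity of $G$ on the components; everything else follows directly from the preparatory lemmas.
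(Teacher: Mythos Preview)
Your argument is correct and follows essentially the same route as the paper: invoke \cref{prop:invert stuff} to obtain a non-abelian $p'$-subgroup $H\le N_1$, form $W=\gen{H,a}\cong H\wr C_p$, and apply \cref{lem:dih calc} to produce $a^wa\in A^2$ that is not a $p$-element. The only difference is cosmetic: the paper places the reduction to $\abs{G/N}=p$ and the decomposition $N=N_1\times\cdots\times N_p$ in the paragraph preceding the lemma (using transitivity on components directly), whereas you re-derive it inside the proof and handle the $G=N$ case via \cref{lem:lem3}; also, the appeal to \cref{lem:lem2} is unnecessary since normality of $A$ already gives $a^w\in A$.
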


\begin{proof} Suppose that $N$ is not simple. By \cref{prop:invert stuff} there exists a non-abelian subgroup  $H \le N_1$ of order coprime to $p$.  Then $$W=\gen{H,a} \cong H\wr \mathbb Z/p\mathbb Z.$$ Now \cref{lem:dih calc} gives that $(W\cap A)^2$ is not a set of \(p\)-elements, which is a contradiction.
\end{proof}

\begin{Lem}	\label{lem:3.5}
	Suppose \(H \le G\), $a \in A \cap N_G(H)$ and $G\ne H\gen{a}$. Then $a \in C_G(E(H))$.
\end{Lem}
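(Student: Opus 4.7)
The plan is to apply \cref{lem:basic1} to the subgroup $M := E(H)\langle a \rangle$ and then exploit the perfectness of the layer. Since $E(H)$ is characteristic in $H$ and $a \in N_G(H)$, $a$ normalizes $E(H)$, so $M$ is a subgroup and $M \le H\langle a\rangle < G$ by hypothesis. The set $A \cap M$ contains $a$, so by \cref{lem:basic1} the subgroup $K := \langle A \cap M\rangle$ is a soluble normal subgroup of $M$.

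Next I would show that $K \cap E(H) \le Z(E(H))$. As a central product of quasisimple groups, $E(H)/Z(E(H))$ is a direct product of non-abelian simple groups, so every soluble normal subgroup of $E(H)$ is contained in $Z(E(H))$. Since $E(H) \trianglelefteq M$, the subgroup $K \cap E(H)$ is soluble and normal in $E(H)$, and hence lies in $Z(E(H))$.

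The key step is the following observation: for every $x \in E(H)$, the conjugate $a^x$ lies in $A \cap M \subseteq K$ (since $A$ is a normal subset of $G$ and $x \in M$), while $[a,x] = a^{-1}a^x$ belongs to $E(H)$ because $a$ normalizes $E(H)$ (writing $[a,x] = (x^{a^{-1}})^{-1}x$). Combining, $[a, x] \in K \cap E(H) \le Z(E(H))$, so $[a, E(H)] \le Z(E(H))$. In other words, conjugation by $a$ induces an automorphism of $E(H)$ that is trivial on $E(H)/Z(E(H))$.

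Finally, since $E(H)$ is perfect (a central product of perfect groups), such a central automorphism must be trivial: writing $x^a = xz_x$ with $z_x \in Z(E(H))$ for each $x \in E(H)$, the centrality of the $z_x$'s gives $[x,y]^a = [xz_x, yz_y] = [x,y]$ for all $x,y \in E(H)$, and since $E(H) = E(H)'$ we conclude that $a \in C_G(E(H))$. The only mild subtlety is this final observation that a central automorphism of a perfect group is trivial, but it is standard; no results beyond \cref{lem:basic1} and the basic structural properties of the layer are needed.
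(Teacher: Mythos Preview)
Your argument is correct and follows the same underlying strategy as the paper's proof: apply \cref{lem:basic1} to a proper subgroup containing $a$ to produce a soluble normal subgroup containing $a$, and then conclude that this subgroup centralizes the layer. The paper works with $J = H\gen{a}$ and finishes in one line by invoking \cite[31.4]{Aschbacher} (soluble normal subgroups centralize the layer), whereas you work with the smaller group $E(H)\gen{a}$ and unpack that citation by hand, showing $K\cap E(H)\le Z(E(H))$ and then that a central automorphism of the perfect group $E(H)$ is trivial. Your route is thus a bit more elementary and self-contained; the paper's is terser. One inconsequential slip: the parenthetical formula $[a,x] = (x^{a^{-1}})^{-1}x$ is not correct as written, but all you need is $[a,x]\in E(H)$, which follows immediately from $E(H)$ being $a$-invariant.
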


\begin{proof}
	Suppose that $a \in N_G(H)$ and set $J=H\gen{a} \le G$. Then $J \ne G$ by hypothesis and so \cref{lem:basic1} implies $K=\gen{J \cap A}$ is soluble normal subgroup of $J$. But then $a \in J \le C_H(E(H))$ by \cite[\nopp 31.4]{Aschbacher}.
\end{proof}

\section{Finite groups of Lie type}	\label{sec:Lie type}

In this section $N$ will be a group of Lie type. Our first result exploits \cref{lem:GrossApp} and shows that $a\in A$ cannot induce field, graph-field or graph automorphisms on $N$. We follow the notational conventions of \cite{GLS3} for almost simple groups.

\begin{Lem}\label{lem:not field etc}
	Suppose that \(N\) is a simple group of Lie type. Then $G \le \mathrm{Inndiag}(N)$.
\end{Lem}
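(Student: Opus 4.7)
The plan is to suppose for contradiction that $a \notin \mathrm{Inndiag}(N)$, so that $a$ induces a non-trivial field, graph-field, or graph automorphism on $N$ of odd prime order $p$ (odd by \cref{lem:p odd}), and then combine \cref{lem:GrossApp} with \cref{lem:3.5} to reach a contradiction.

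First, I would apply \cref{lem:GrossApp} to the simple group $N$ and the outer automorphism $\alpha = a$ of order $p$ (using that $p$ divides $\abs{N}$ by \cref{lem:lem5}), obtaining an element $b \in a^N \cap C_{N\gen{a}}(a)$ with $b \notin \gen{a}$. Writing $b = am$ with $m \in N$ (valid since $b \in aN$ as $N \trianglelefteq G$), the relation $[a,b] = 1$ immediately gives $[a,m] = 1$, and as $a$ and $m$ commute, $1 = b^p = a^p m^p = m^p$, so $m$ has order dividing $p$; since $b \notin \gen{a}$, we have $m \ne 1$. Thus $m$ is a non-trivial element of order $p$ in $C_N(a)$, and $\gen{a, m}$ is elementary abelian of order $p^2$.

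Next I would set $H := C_N(m)$. Since $N$ is non-abelian simple and $m \ne 1$, $H$ is a proper subgroup of $N$, so $H\gen{a} < G$. Because $a$ centralizes $m$, $H$ is $a$-invariant, so $a \in N_G(H)$. Applying \cref{lem:3.5} then forces $a \in C_G(E(H))$, so $a$ centralizes every component of $C_N(m)$.

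The final and hardest step is to rule this out using the Lie-type structure of $N$. I would use the notational conventions of \cite{GLS3} and run through the possibilities for $N$, the outer automorphism type of $a$ (necessarily field, graph-field, or graph of odd order $p$; in particular graph automorphisms occur only for $p = 3$ and $N$ of type $D_4$), and the order-$p$ element $m \in C_N(a)$; in each case one should exhibit a component of $C_N(m)$ that $a$ cannot centralize. The main obstacle is this case-by-case verification: in defining characteristic $m$ is a unipotent element whose centralizer has a large unipotent radical, so one must locate the reductive (Levi-like) part and verify that $a$ acts non-trivially there, while in non-defining characteristic $m$ is semisimple and one needs some component of the connected centralizer $C_N(m)^\circ$ to fall outside the fixed subgroup $C_N(a)$, with small-rank cases and exceptional centralizer configurations requiring separate attention.
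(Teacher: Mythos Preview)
Your opening move---applying \cref{lem:GrossApp} to produce $b \in a^N \cap C_G(a)$ with $b \notin \gen{a}$, and then extracting the commuting element $m = a^{-1}b \in C_N(a)$ of order $p$---matches the paper exactly. The divergence is in how the contradiction is extracted. You feed $H = C_N(m)$ into \cref{lem:3.5} and plan a case analysis showing that $a$ cannot centralize some component of $C_N(m)$. The paper instead stays with $C_N(a)$: it observes that $X=\gen{A\cap C_G(a)}$ is a soluble normal subgroup of $C_G(a)$ by \cref{lem:basic1}, so $X\cap N$ is a non-trivial soluble normal subgroup of $C_N(a)$ (non-trivial because $1\ne ab^{-1}\in X\cap N$). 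For field and graph-field automorphisms \cite[Proposition~4.9.1]{GLS3} gives that $O^{r'}(C_N(a))$ is simple and equals $F^*(C_N(a))$, apart from a short explicit list of tiny groups pushed to \cref{prop:sporadics}; a non-trivial soluble normal subgroup is then impossible. The graph case ($p=3$, type $\mathrm D_4$ or ${}^3\mathrm D_4$) is finished by looking up the possible centralizers $C_N(a)$ in \cite{GLS3} and again finding no room for a soluble normal subgroup.

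Your route has a genuine gap: it needs $E(C_N(m))\ne 1$, and this can fail. Take $N=\PSL_2(q^p)$ with $q\ge 4$ and $a$ a field automorphism of order $p$. Every element $m\in N$ of order $p$ has $C_N(m)$ soluble (a cyclic torus if $p\nmid q$, the unipotent radical of a Borel if $p$ is the defining characteristic), so $E(C_N(m))=1$ and \cref{lem:3.5} says nothing. The same obstruction appears whenever $m$ is regular, which is not a rare configuration. You flag ``small-rank cases requiring separate attention'' but propose no mechanism for them, and the tool you have set up does not apply there. The paper's choice to work with $C_N(a)$ rather than $C_N(m)$ is what makes the argument uniform: the centralizer of a field or graph-field automorphism is always a Lie-type group over the subfield, generically simple, so the contradiction is immediate and the residual cases form a finite explicit list.
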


\begin{proof}
	Let $\GF(q)$ be the field of definition of $N$ where $q=r^c$ and $r$ is a prime. If $G \not \le \mathrm{Inndiag}(N)$, then, for $a\in A$, $aN$ contains a field, graph or graph-field automorphism. Let $S \in \Syl_p(G)$ be chosen so that $a \in S$. By \cref{lem:lem5}, $S\cap N \ne 1$. As $p$ is odd, \cref{lem:GrossApp} yields
	$$A \cap C_G(a) \not\subset \gen{a} $$
	and so $X =\gen{A \cap C_G(a)}$ is a normal soluble subgroup of $C_G(a)$ with $X> \gen{a}$. In particular, $X \cap N$ is a non-trivial soluble normal subgroup of $C_N(a)$.

	First consider the possibility that $aN$ contains either a field or a graph-field automorphism. Set $K_a= O^{r'}(C_N(a))$.
	Assume that $N \not \cong \SL_2(2^p)$, $\SL_2(3^p)$, $\SU_3(2^p)$, ${}^2\mathrm B_2(2^p)$. Then $K_a$ is a non-abelian simple group and $K_a=F^*(C_N(a))$ by \cite[Proposition 4.9.1 (a) and (b)]{GLS3}. Since $X$ is normal and soluble, we have $[X,K_a]\le X \cap K_a=1$. Hence $X\cap C_N(a)=1$, which is impossible as $\gen{a,b} \cap C_N(a)\ge \gen{ab^{-1}}$ where $b \in (A\cap C_G(a))\setminus \gen{a}$. So consider the ellipted possibilities for $N$. In each case we know that $p$ divides $\abs{C_K(a)}$ and thus we have $p =3$ in the first three cases and $p=5$ in the last case. For these groups we refer to \cref{prop:sporadics}. Hence $aN$ contains neither a field nor a graph-field automorphism.

	Assume that $aN$ has a graph automorphism. Then $p=3$ and $N \cong {}^3\mathrm D_4(q)$ or $\mathrm D_4(q)$.
	Assume additionally that $r=3$. Then \cite[Proposition 4.9.2(g)]{GLS3} tells us that either $a$ is a pure graph automorphism $\tau$, say, or $a= \tau z$ where $z$ is a long root element of $N$ contained in $C_N(\tau)$.
	Furthermore, \cite[Proposition 4.9.2 (b)(5)]{GLS3} yields
	$C_N(\tau) \cong \mathrm G_2(q)$. If $a=\tau$, then, as $X \cap N$ is a non-trivial soluble normal subgroup of $C_N(\tau)$, we have a contradiction. Thus $a= \tau z$. By \cite[Proposition 4.9.2 (b)(5)]{GLS3}, $C_N(\tau z)= C_{C_N(\tau)}(z) < C_{N}(\tau)$ and so $A\cap C_G(\tau) \not\subset \gen{\tau z}$. In particular, \(\gen{A \cap C_G(\tau)}\) is a non-trivial soluble normal subgroup of the simple group \(C_N(\tau)\). This contradiction shows that $r\ne 3$. The story is similar in this case, however we need to employ \cite[Table 4.7.3A]{GLS3}. We see that for both $N \cong {}^3\mathrm D_4(q)$ and $\mathrm D_4(q)$, one possibility is that $C_N(a) \cong \mathrm G_2(q)$ and otherwise
  $C_N(a)\cong \PGL_3(q)$ if $q \equiv 1 \pmod 3$ and $C_N(a)\cong \mathrm{PGU}_3(q)$ if $q\equiv 2 \pmod 3$. In all cases, we have $X \cap N$ is a non-trivial soluble normal subgroup of $C_N(a)$ and so we conclude that $q=3$ and $C_N(a) \cong \PGU_3(2)$ and $N \cong \mathrm D_4(2)\cong \Omega_8^+(2)$ or $\triality(2)$. Now we refer to \cref{prop:sporadics} to complete the proof.
\end{proof}

\begin{remark}
	While investigating the ``pure" triality automorphism of $N=\mathrm P\Omega_8^+(3) = \D_4(3)$, we happened upon a minor error in \cite[Theorem $\mathrm {A}^*$]{Guest}. Let $\tau$ be such an automorphism. Then a {\sc Magma} \cite{Magma} calculation reveals that $\gen{\tau, \tau ^x}$ is soluble for all $x\in N$. This example is not listed in \cite[Table 1]{Guest} whereas $\mathrm G_2(3)$ is listed.
	The fact that $\mathrm G_2(3)$ is present in \cite[Table 1]{Guest} invalidates the argument in the last lines of the proof of \cite[Lemma 7]{Guest}. The result for the general case is correct.
\end{remark}

\begin{Lem}\label{lem:notpara}
  Suppose that \(N\) is a simple group of Lie type and assume $G \le \mathrm{Inndiag}(N)$. If $P$ is a parabolic subgroup of $G$, then $A\cap P = \emptyset$.
\end{Lem}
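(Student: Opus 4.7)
The plan is to argue by contradiction. Suppose $a\in A\cap P$ for some parabolic subgroup $P$ of $G$. Since any parabolic is contained in a maximal parabolic, and a maximal parabolic containing a given element still contains it after enlargement, we may replace $P$ by a maximal parabolic subgroup of $G$ containing $a$. Because $P<G$ and $A\cap P\neq\emptyset$, \cref{lem:basic1} tells us $K:=\gen{A\cap P}$ is a soluble normal subgroup of $P$ with $a\in K$. Moreover $a\in P = N_G(P)$ and $P\gen{a}=P\neq G$, so \cref{lem:3.5} applies with $H=P$ and gives
\[a\in C_G(E(P)).\]

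Next, I would split on whether $P\cap N$ is soluble. By \cref{lem:solvpara} (applied to the maximal parabolic $P\cap N$ of $N$, after noting that $G\le \mathrm{Inndiag}(N)$ does not create new maximal parabolics beyond those induced from $N$), solubility of $P\cap N$ forces $N$ into the explicit finite list of small Lie type groups recorded there; for each of these, the required conclusion will be obtained by appealing to the sporadic computer verifications of \cref{sec:computer} (Proposition sporadics). This handles the low-rank/soluble-parabolic case uniformly.

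In the remaining generic case, $P\cap N$ is non-soluble, so $E(P)\neq 1$ and in fact $E(P)$ is a non-trivial central product of quasisimple groups of Lie type corresponding to the simple components of the Levi factor. The displayed inclusion $a\in C_G(E(P))$ is then a strong constraint: by standard structure theory of parabolic subgroups in groups of Lie type (consulting the Levi decomposition $P=UL$ with unipotent radical $U$), the centralizer $C_G(E(P))$ is contained in the product of $O_r(P)=U$ with a small central/diagonal piece of $L$ (together with the part of the diagonal subgroup that commutes with the Levi components). Combined with $a$ being a $p$-element of order $p$ and lying in the soluble normal subgroup $K\trianglelefteq P$, this pins $a$ into a narrow subgroup of $P$, and I then intend to exhibit $b\in A$ (produced by conjugating $a$ inside a suitable subgroup of $C_G(E(P))$, using \cref{lem:GrossApp} when $p\ne r$ or a direct root-subgroup argument when $p=r$) such that $ab$ has an order divisible by a prime different from $p$, contradicting the hypothesis that $A^2$ consists of $p$-elements.

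The main obstacle is the final centralizer/product analysis in the generic case: converting the qualitative statement $a\in C_G(E(P))$ into the concrete production of a witness $b\in A$ with $ab$ not a $p$-element. This is where the Lie type classification intervenes, but the tight list in \cref{lem:solvpara} combined with the computer input of \cref{sec:computer} should reduce the matter to a generic argument about centralizers of Levi components, which I expect is clean once all low-rank debris has been absorbed into the sporadic case.
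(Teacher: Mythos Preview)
Your approach diverges from the paper's, and the ``generic case'' (non-soluble maximal parabolic) is a genuine gap rather than a detail to be filled in later. The conclusion $a\in C_G(E(P))$ is correct but weak: it only places $a$ in the product of the unipotent radical with the centralizer of the Levi components, and from there you still have to manufacture a conjugate $b\in A$ with $ab$ not a $p$-element. Carrying that out would force a split on $p=r$ versus $p\ne r$ and a case-by-case analysis of centralizers of Levi factors across the Lie types---precisely the kind of work this lemma is meant to absorb uniformly, and there is a real risk of circularity with the later lemmas (\cref{lem:notcross}, \cref{lem:char p generic}) that invoke \cref{lem:notpara}.

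The paper sidesteps the non-soluble case entirely with an observation you are missing: since $\gen{A\cap P}$ is soluble and normal in $P$, the product $\gen{A\cap P}\,B$ (with $B\le P$ a Borel) is a \emph{soluble} overgroup of $B$, hence itself a parabolic. So the mere existence of some $a\in A\cap P$ already produces a soluble parabolic. Now maximize: among parabolics $R\supseteq B$, choose $R$ with $M=\gen{A\cap R}\,B$ of largest order. If $M$ is not a maximal parabolic it lies in two distinct maximal parabolics $L_1,L_2$; maximality of $|M|$ forces $A\cap L_1=A\cap M=A\cap L_2$, so $\gen{A\cap L_1}=\gen{A\cap L_2}$ is normal in $\gen{L_1,L_2}=G$, impossible since $G$ has no non-trivial soluble normal subgroup. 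Hence $M$ is a soluble \emph{maximal} parabolic, and \cref{lem:solvpara} together with \cref{prop:sporadics} finishes. There is no non-soluble branch to analyse.
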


\begin{proof}
  Let $B$ be a Borel subgroup of $G$ contained in $P$ and assume that $A\cap P \not=\emptyset$.
  By \cref{lem:basic1}, $ \gen{A \cap P}$ is a soluble normal subgroup of $P$. In particular, $B$ normalizes $ \gen{A \cap P}$ and so $\gen{A \cap P} B\ge B$ is a parabolic subgroup of $G$. From among all the parabolic subgroups containing $B$ choose $R$ so that $M = \gen{A \cap R} B$ has maximal order. Then $M$ is soluble. Suppose that $M$ is not a maximal parabolic subgroup of $G$. Then there exist maximal parabolic subgroups $L_1$ and $L_2$ with $M \le L_1\cap L_2$. Notice that $A \cap M \subset A \cap L_i$, $i=1,2$ and the maximal choice of $M$ implies that $A\cap L_1=A\cap M= A \cap L_2$.
  But then $\gen{A\cap L_1}=\gen{A \cap L_2}$ is normal in $\gen{L_1, L_2}=G$, which is a contradiction. Hence $M$ is a maximal parabolic subgroup. Now \cref{prop:sporadics} provides the contradiction.
\end{proof}

\subsection{Cross characteristic}

In this subsection we assume that $N$ is defined in characteristic $r$ with $r \ne p$. For the first lemma, we note that by \cref{lem:not field etc} we have $N \le G \le \mathrm{Inndiag}(N)$. Notice that the next lemma requires that $A \subset N$ so that Gow's Theorem may be applied.

\begin{Lem}\label{lem:GowApp}
	Suppose that $G=N$ is a simple Lie type group defined in characteristic $r \ne p$. Then $r$ divides $\abs{C_G(a)}$ for all $a\in A$.
\end{Lem}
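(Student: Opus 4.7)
The strategy is proof by contradiction. Since $a$ has order $p \ne r$ it is a semisimple element of the simple group $G$ of Lie type in characteristic $r$. The condition $r \nmid \abs{C_G(a)}$ is equivalent, via the structure of semisimple centralizers in reductive algebraic groups, to $a$ being a regular semisimple element of $G$ (so that $C_G(a)$ equals the $F$-fixed points of a maximal torus of the ambient algebraic group). Assume, for contradiction, that this is the case.

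Because the hypothesis $A \subseteq N = G$ (highlighted in the remark preceding the lemma) places the class $A = a^G$ inside the finite simple Lie-type group $G$, Gow's theorem on products of semisimple conjugacy classes applies to $A$. In the form I shall use, it asserts that for any non-trivial semisimple conjugacy class $C$ of such a group, the product $C \cdot C$ contains every non-trivial semisimple element (in particular every regular semisimple element) of the group. Applied to $C = A$, this yields $A^2 \supseteq \{\text{non-trivial semisimple elements of } G\}$, and combined with the standing hypothesis that $A^2$ consists entirely of $p$-elements, we conclude that every non-trivial semisimple element of $G$ must be a $p$-element.

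To reach the contradiction, I would exhibit a semisimple element of $G$ of order not a power of $p$. Since $G$ is a non-abelian finite simple group, Burnside's $p^{a}q^{b}$ theorem forces $\abs{G}$ to be divisible by at least three distinct primes, so there is a prime $\ell$ with $\ell \mid \abs{G}$ and $\ell \notin \{p, r\}$. By Cauchy's theorem, $G$ contains an element of order $\ell$; it is semisimple because $\ell \ne r$, and not a $p$-element because $\ell \ne p$, providing the contradiction. The main obstacle I anticipate is invoking Gow's theorem in precisely the right form for a finite simple group of Lie type; any small-group exceptions that are not covered directly can be handled via the computational framework indicated by \cref{prop:sporadics}.
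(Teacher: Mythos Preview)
Your proposal is correct and follows essentially the same route as the paper: assume $r\nmid\abs{C_G(a)}$, deduce that $a$ is regular semisimple, invoke Gow's theorem to see that $A^2$ contains every non-trivial semisimple element, and then exhibit a semisimple element of order a prime $s\neq p,r$ (the paper simply picks such an $s$ dividing $\abs{G}$, which is justified by your Burnside observation). One small point of precision: Gow's theorem, as cited in the paper, requires the class $C$ to be \emph{regular} semisimple, not merely semisimple; your application is unaffected since you have already established that $A$ is regular semisimple, but your stated hypothesis ``for any non-trivial semisimple conjugacy class'' is stronger than what Gow proves.
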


\begin{proof}
	 Suppose not. As $\abs{C_G(a)}$ is coprime to $r$, according to Gow \cite{Gow}, $a$ is a regular semisimple element of $G$. Let $s$ be a prime dividing $\abs{G}$ with $r\ne s\ne p$. Then, by \cite[Theorem 2]{Gow}, there exists $b \in A$ such that $ab$ has order $s$, a contradiction. Hence $r$ divides $\abs{C_G(a)}$.
\end{proof}

\begin{Lem}\label{lem:crossRank1}
	The subgroup $N$ is not a rank $1$ Lie type group in characteristic $r \ne p$.
\end{Lem}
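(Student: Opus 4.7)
Assume for contradiction that $N$ is a rank 1 simple group of Lie type over $\mathbb{F}_q$ with $q = r^c$ and $r \ne p$, so $N$ is one of $\PSL_2(q)$, $\PSU_3(q)$, $\Sz(q)$ or $\Ree(q)$. By \cref{lem:not field etc}, $N \le G \le \mathrm{Inndiag}(N)$, and by \cref{lem:lem5}, $|G/N|$ divides $p$.

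Pick $a \in A$. Since $a$ has order $p \ne r$, $a$ is a semisimple element of the ambient algebraic group. In rank 1 every proper parabolic subgroup of $G$ is a Borel subgroup, and by \cref{lem:notpara}, $a$ lies in no such subgroup. Running through the conjugacy classes of maximal tori in the four families, this forces $a$ to be a regular semisimple element with $C_G(a)$ a non-split maximal torus of order coprime to $r$. Indeed, the non-regular semisimple elements (which arise only in $\PSU_3(q)$) have centralizers contained in the Levi factor of a maximal parabolic, while split tori in $\PSL_2(q)$ and analogues in $\PSU_3(q)$ lie inside Borel subgroups; both are excluded by \cref{lem:notpara}. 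For $\Sz(q)$ and $\Ree(q)$ every nontrivial semisimple element is automatically regular, and its centralizer is one of the three Hall subgroups of $N$ whose orders divide $q\pm 1$ or the appropriate cyclotomic quantity, always coprime to $r$.

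If $G = N$, then \cref{lem:GowApp} applied to the regular semisimple $a$ forces $r \mid |C_N(a)|$, contradicting the coprimality just established. The only remaining case has $G > N$, and an inspection of the diagonal automorphism groups of the four families leaves only $N = \PSU_3(q)$, $p = 3$, $3 \mid q+1$, $G = \PGU_3(q)$, with $a$ inducing a nontrivial diagonal automorphism of $N$. Here $A \cap N = \emptyset$ (otherwise $\langle A \rangle \le N$ and $G = N\langle a\rangle = N$), and lifting $a$ to $\hat a \in \GU_3(q)$ with $\hat a^3$ a nontrivial scalar in $Z(\GU_3(q))$ shows $\hat a$ has three distinct eigenvalues (the three cube roots of $\hat a^3$), so $a$ is again regular semisimple with centralizer a maximal torus of order coprime to $r$. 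Extending Gow's product-of-classes argument from $\SU_3(q)$ to $\GU_3(q)$, or equivalently by a direct character-theoretic computation with the class multiplication coefficients of $\PGU_3(q)$, one produces $b \in a^G = A$ with $ab$ of order $r$, contradicting the hypothesis that $A^2$ consists of $3$-elements.

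The main obstacle is the subcase $G > N$ for $N = \PSU_3(q)$: \cref{lem:GowApp} is stated only for the simple group $N$, so one must either adapt Gow's argument to $\mathrm{Inndiag}(N)$-conjugacy classes or carry out a direct calculation in $\PGU_3(q)$. Once that is in place, the remainder of the proof is a clean reduction via \cref{lem:notpara} together with \cref{lem:GowApp}, handled uniformly across the four families by the parabolic/torus dichotomy.
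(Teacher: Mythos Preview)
Your approach leans on \cref{lem:notpara} to place $a$ outside every Borel subgroup, but the proof of \cref{lem:notpara} does not cover rank~1: it reduces to the existence of a soluble maximal parabolic and then appeals to \cref{prop:sporadics}, which for rank $\ge 2$ is backed by the finite list in \cref{lem:solvpara}, whereas in rank~1 the Borel is \emph{always} soluble and \cref{prop:sporadics} checks only a handful of small groups. The paper accordingly handles rank~1 without ever invoking \cref{lem:notpara}. For $\PSL_2(q)$, $\Sz(q)$ and $\Ree(q)$ it observes directly that the centraliser of any element of order $r$ is an $r$-group (a $\{2,3\}$-group for $\Ree$); since $p$ is odd and $p\ne r$, the element $a$ centralises no $r$-element, so $r\nmid\abs{C_G(a)}$, which contradicts \cref{lem:GowApp}. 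For $\PSU_3(q)$ the argument runs in the \emph{opposite} direction to yours: \cref{lem:GowApp} forces $r\mid\abs{C_G(a)}$, so $a$ is non-regular with $C_G(a)\cong\GU_2(q)$ and $p\mid q+1$; a Weyl-conjugate of $a$ inside the torus of order $(q+1)^2/\gcd(q+1,3)$ then gives $A\cap C_G(a)\not\le Z(C_G(a))$, which is incompatible with $\SU_2(q)\cong\SL_2(q)$ being quasisimple unless $q\le 3$, and the residual case is dispatched by \cref{prop:sporadics}.

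Your $G>N$ subcase for $\PSU_3(q)$ also contains a concrete error: the assertion that a lift $\hat a\in\GU_3(q)$ with $\hat a^3$ scalar must have three distinct eigenvalues is false, since the eigenvalues are cube roots of that scalar but may repeat. For instance, when $3\mid q+1$ but $9\nmid q+1$, the element $\hat a=\mathrm{diag}(1,1,\omega)$ with $\omega$ a primitive cube root of unity has $\hat a^3=I$, a repeated eigenvalue, and image lying in $\PGU_3(q)\setminus\PSU_3(q)$; such $a$ is not regular and its centraliser has order divisible by $r$, so the proposed extension of Gow's theorem---already acknowledged as a gap---cannot proceed as sketched.
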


\begin{proof}
	By \cref{lem:not field etc}, $G$ can be identified as a subgroup of $\mathrm{Inndiag}(N)$. Suppose that the lemma is false.

	If $N \cong \PSL_2(r^c)$ or, $r=2$ and $N\cong {}^2\mathrm B_2(r^c)$, then $N=G$ and $C_G(x)$ is an $r$-group for every element $x\in G$ of order $r$. Thus, as $p\ne r$, $\abs{C_G(a)}$ is an $r'$-group for all $a \in A$, and this contradicts \cref{lem:GowApp}.

	Similarly, if $r=3$, $G= {}^2\mathrm G_2(3^c)$, $c \ge 3$, then, by \cite[Theorem (3)]{Ward} the centralizer of a $3$-element is either a $3$-group or a $\{2,3\}$-group and so this case is also eliminated by \cref{lem:GowApp} as $p$ is odd.

	Suppose that $N \cong \PSU_3(r^c)$. Then \cref{lem:GowApp} implies that $p$ divides $r^c+1$ and therefore $C_G(a) \cong \mathrm{GU}_2(r^c)$. In particular, $a$ is contained in a torus of order $(r^c+1)^2/\gcd(r^c+1,3)$ and so $A \cap C_G(a) \not \le Z(C_G(a))$ and from this we conclude that $r^c=r=3$ and we refer to \cref{prop:sporadics} to eliminate this case.
\end{proof}

\begin{Lem}\label{lem:notcross}
	We have $N$ is not a Lie type group in characteristic $r \ne p$.
\end{Lem}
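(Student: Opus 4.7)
Assume for contradiction that $N$ is a simple Lie type group in characteristic $r\neq p$. By \cref{lem:crossRank1}, $N$ has BN-rank at least $2$, and by \cref{lem:not field etc}, $G\le \mathrm{Inndiag}(N)$. The overarching plan is to produce, for some $a\in A$, a non-trivial $r$-subgroup that is normalized by $a$; the Borel--Tits theorem will then force $a$ to lie in a proper parabolic subgroup $P$ of $G$, so that $a\in A\cap P$, contradicting \cref{lem:notpara}. In view of this, it suffices to exhibit an $r$-element of $C_G(a)$ for some $a\in A$.

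First I would reduce to the case $G=N$. By \cref{lem:lem5}, $\lvert G:N\rvert\in\{1,p\}$. If $\lvert G:N\rvert=p$, then since $A$ is a $G$-conjugacy class and $N$ is normal, either $A\subseteq N$ or $A\cap N=\emptyset$; the former would give $G=\gen{A}\le N$, contradicting \cref{lem:basic1}. Hence every $a\in A$ maps to a generator of the cyclic group $G/N$ and thus induces a non-trivial diagonal automorphism of $N$ of order $p$. This restricts the possibilities for $N$ to the few types for which $\mathrm{Inndiag}(N)/N$ can have a non-trivial odd $p$-part, namely linear, unitary, $\E_6$ and ${}^2\E_6$ types with appropriate congruences on $q$.

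For the reduced subcase $G=N$, \cref{lem:GowApp} immediately supplies an element $u\in C_G(a)$ of order $r$, and the Borel--Tits theorem gives $a\in N_G(\gen{u})\le P$ for some proper parabolic $P$ of $G$, contradicting \cref{lem:notpara}. For the remaining subcase $G>N$, I would argue that for each of the restricted types above the centralizer $C_N(a)$ of a diagonal automorphism of order $p$ is (a central extension of) a Lie type group of strictly smaller but positive rank in the same characteristic $r$, and so contains non-trivial unipotent elements; these supply the required $r$-element in $C_G(a)$, and Borel--Tits finishes the argument as before.

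The main obstacle is the diagonal-automorphism subcase $G>N$: unlike the $G=N$ case handled cleanly by Gow's theorem, this one forces a case-by-case inspection of the centralizers of diagonal $p$-elements in the linear, unitary and two $\E_6$ types, together with verification that the relevant $p$-power divisors of $\gcd(n,q\mp1)$ or of $q\mp1$ actually produce a reductive Levi-type centralizer of positive Lie rank over $\GF(r)$, rather than, say, a torus.
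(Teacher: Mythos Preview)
Your handling of the case $G=N$ via \cref{lem:GowApp} and the overall Borel--Tits strategy agree with the paper. The difficulty you isolate in your last paragraph, however, is not a technicality awaiting verification but a genuine obstruction: the claim that $C_N(a)$ always has positive Lie rank is false. For instance, take $N=\PSL_p(q)$ with $p\mid q-1$ and pick $\hat a\in\GL_p(q)$ with irreducible characteristic polynomial $x^p-\lambda$ for some non-$p$th-power $\lambda\in\GF(q)^\times$; then $\hat a$ is regular semisimple, $\det(\hat a)=\lambda$, and the image of $\hat a$ in $\PGL_p(q)$ is an element of order $p$ lying outside $\PSL_p(q)$ whose centralizer in $N$ is a maximal torus with no unipotent elements. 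So a uniform inspection of $C_N(a)$ cannot succeed.

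The paper's route past this is different and uses the minimal-counterexample hypothesis in an essential way. One first fixes a maximal torus $T$ with $N_G(T)$ containing a Sylow $p$-subgroup of $G$ and observes that $\gen{A\cap N_G(T)}T/T$ is a soluble normal subgroup of the Weyl group $W$ (soluble by \cref{lem:basic1}, normal because $A$ is $G$-invariant); inspection of $W$ then forces $A\cap N_G(T)\subset T$. This is precisely what eliminates classes like the regular one above, which meet $N_G(T)$ only outside $T$. With $a\in T$ in hand, the four families split: for $\PSL_{dp}$ and $\E_6$ the torus $T$ is maximally split and sits inside a Borel subgroup, contradicting \cref{lem:notpara} directly without ever producing an $r$-element in $C_G(a)$; for $\PSU_{dp}$ and ${}^2\E_6$ one exhibits a subgroup $L$ normalized by $T$ with $F^*(L)$ quasisimple (the stabilizer of a non-degenerate $1$-space, respectively a subgroup $2\cdot(\PSL_2(q)\times\PSU_6(q))$), and then \cref{lem:3.5} forces $a$ to centralize $E(L)$, supplying the $r$-element in $C_G(a)$ you were after. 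The ingredients your outline lacks are thus the Weyl-group normality argument and the appeal to \cref{lem:3.5}.
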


\begin{proof}
Again we use \cref{lem:not field etc} to identify $G$ with a subgroup of $\mathrm{Inndiag}(N)$. We have that $G$ has rank at least $2$ by \cref{lem:crossRank1}.

Let $a\in A$ and $R \in \Syl_r(C_G(a))$. If $R\ne 1$, by the Borel--Tits Theorem \cite[Theorem 3.1.3]{GLS3}, $$a \in C_G(R) \le N_G(R) \le P$$ for some parabolic subgroup $P$ of $G$. This then contradicts \cref{lem:notpara}. Hence

\centerline{$\abs{C_G(a)}$ is not divisible by $r$. }\bigskip

\noindent Therefore
\cref{lem:GowApp} implies $A \subseteq G\setminus N$. Since $p$ is odd, \cref{lem:not field etc} and \cite[Theorem 2.5.12]{GLS3} imply that $N \cong \PSL_{dp}(r^b)$ or $\mathrm E_6(r^b)$ and $p$ divides $r^b-1$, or $N \cong \PSU_{dp}(r^b)$ or ${}^2\mathrm E_6(r^b)$ and $p$ divides $r^b+1$.

In all cases, a Sylow $p$-subgroup $S$ of $G$ is contained in $N_G(T)$ where $T$ is a torus and $N_G(T)/T \cong W$, the (untwisted) Weyl group of $G$. If $A \cap N_G(T) \not \subset T$, then $ \gen{A\cap N_G(T)} T/T$ is a soluble normal subgroup of $W$. We conclude that $W\cong \Sym(4)$ or $\Sym(3)$ and $p=3$, but then $\gen{A\cap N_G(T)} T/T\le W'$ whereas we have assumed $A \not\subseteq N$.
Thus $A \cap N_G(T) \subset T$.

If $N\cong \PSL_{dp}(r^b)$ or $\mathrm E_6(r^b)$, then $T$ is a maximally split torus and thus is conjugate to a subgroup of a Borel subgroup $B$. But then $A \cap B \not=\emptyset$, contrary to \cref{lem:notpara}. Hence $N \cong \PSU_{dp}(r^b)$ or ${}^2\mathrm E_6(r^b)$.

Assume that $N \cong \PSU_{dp}(r^b)$. Let $L$ be the image of the stabiliser of a non-degenerate $1$-space which is stabilised by $T$. Then, as $dp\ne 4$ by \cref{lem:crossRank1}, $F^*(L)\cong \SU_{dp-1}(r^b)$ is quasisimple.
Since $T$ normalizes $A$, we have $A\cap N_G(L) \ne \emptyset$.
Hence \cref{lem:3.5} shows that $L$ is centralized by $\gen{A\cap N_G(L)} > 1$. In particular, each $a \in A$ centralizes an $r$-element, a contradiction.

If $N \cong {}^2\E_6(q)$, then, using \cite[Table 5.1]{LiebeckSaxlSeitz}, $T$ normalizes a subgroup $L$ isomorphic to $2\cdot(\PSL_2(r^b)\times\PSU_6(r^b))$. Hence \cref{lem:3.5} implies $\gen{A\cap L}$ centralizes an $r$-element.
This contradiction completes the proof.
\end{proof}

\subsection{Defining characteristic}

In this subsection we consider the possibility that $N$ is a Lie type group defined in characteristic $p$. In particular, as $p$ is odd, $N$ is defined in odd characteristic.

\begin{Lem}\label{lem:notPSL2p}
	We have $N$ is not the simple group $\PSL_2(p^c)$.
\end{Lem}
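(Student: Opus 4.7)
The plan is first to reduce to $G = N = \PSL_2(q)$ with $q = p^c \ge 5$, and then to exhibit a pair $a, b \in A$ with $ab$ not a $p$-element by a direct calculation in $\SL_2(q)$.

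For the structural reduction, \cref{lem:not field etc} gives $G \le \mathrm{Inndiag}(N) = \PGL_2(q)$. Since $|\PGL_2(q):\PSL_2(q)|$ divides $\gcd(2,q-1)=2$ while $|G/N|$ divides the odd prime $p$ by \cref{lem:lem5}, we must have $G = N$. Simplicity of $N$ rules out $q \in \{2,3\}$, so $q \ge 5$.

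Next I would work inside $\SL_2(q)$. Because $\PSL_2(q)$ has (at most) two conjugacy classes of elements of order $p$ and the argument below is symmetric between them, after possibly swapping classes I can lift some $a \in A$ to $\tilde a = \begin{pmatrix} 1 & 1 \\ 0 & 1 \end{pmatrix}$. For any $g = \begin{pmatrix} x & y \\ z & w \end{pmatrix} \in \SL_2(q)$ set $\tilde b := g \tilde a g^{-1}$; by construction $\tilde b$ is $\SL_2(q)$-conjugate to $\tilde a$, so its image $b \in \PSL_2(q)$ lies in $A$. A short matrix computation yields $\tilde b - I = \begin{pmatrix} -xz & x^2 \\ -z^2 & xz \end{pmatrix}$ and hence $\operatorname{tr}(\tilde a \tilde b) = 2 - z^2$.

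Since $p$ is odd, an element of $\PSL_2(q)$ is a $p$-element if and only if any of its $\SL_2(q)$-preimages has trace $\pm 2$ (the eigenvalues must lie in $\{\pm 1\}$). So $ab$ is a $p$-element exactly when $z^2 \in \{0, 4\}$, that is $z \in \{0, 2, -2\}$. Since $q \ge 5$ the set $\mathbb{F}_q \setminus \{0, 2, -2\}$ is nonempty: take $z = 1$ if $p \ne 3$, and any $z \in \mathbb{F}_q \setminus \mathbb{F}_3$ if $p = 3$ (where necessarily $c \ge 2$, so $q \ge 9$). Completing $g$ by any $x, y, w$ with $xw - yz = 1$ then produces $a, b \in A$ with $ab$ not a $p$-element, contradicting the hypothesis.

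The step requiring the most care is the bookkeeping between $\SL_2(q)$- and $\PSL_2(q)$-conjugacy: $\PSL_2(q)$ has two conjugacy classes of order-$p$ elements when $q$ is odd, and naive lower-triangular parameterizations would introduce a square-class condition on the single free parameter. The construction $\tilde b = g\tilde a g^{-1}$ bypasses this cleanly by keeping $b$ in the $\PSL_2(q)$-class of $a$ for every choice of $g$.
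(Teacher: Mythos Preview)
Your proof is correct and follows essentially the same route as the paper: lift to $\SL_2(q)$, conjugate the upper unitriangular matrix, and show the product has trace outside $\{\pm 2\}$. The paper uses the specific conjugator $\widehat c = \left(\begin{smallmatrix}0&\lambda\\-\lambda^{-1}&0\end{smallmatrix}\right)$ and examines the trace of $(\widehat a\widehat a^{\widehat c})^2$, whereas your parameterisation via an arbitrary $g$ and the trace $2-z^2$ of $\tilde a\tilde b$ is slightly cleaner and makes the $\PSL_2$-class bookkeeping more explicit.
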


\begin{proof}
	Suppose that $N \cong \PSL_2(p^c)$. We calculate in $\widehat N \cong \SL_2(p^c)$. Let $\widehat A$ be a preimage of $A$ in $\GL_2(p^c)$ consisting of elements of order $p$. Pick $\widehat a\in \widehat A$.
  Since all elements of order $p$ in $\GL_2(p^b)$ are conjugate, we may take $\widehat a= \left(\begin{smallmatrix}1&1\\0&1\end{smallmatrix}\right)$. For $\lambda\in \GF(p^c)$ be non-zero set
	$\widehat c= \left(\begin{smallmatrix}0&\lambda\\-\lambda^{-1}&0\end{smallmatrix}\right)$.
	Then $\widehat a\widehat a^{\widehat c}= \left(\begin{smallmatrix} 1-\lambda^{-2}&1\\-\lambda^{-2}&1\end{smallmatrix}\right)$ and
	$$
		(\widehat a\widehat a^{\widehat c})^2=
		\begin{pmatrix}
			1 - 3 \lambda^{-2} + \lambda^{-4}	&	2 - \lambda^{-2}	\\
			\lambda^{-4} - 2 \lambda^{-2}		&	1 - \lambda^{-2}	\\
		\end{pmatrix}.
	$$
  Since $N$ is a simple group, $p^c> 3$ and so $\lambda$ can be chosen so that $(\widehat a\widehat a^{\widehat c})^2$ does not have trace $2$. As elements of order $p$ have trace $2$, we can choose $\lambda$ so that $\widehat a\widehat a^{\widehat c}$ does not have order $p$ or $2p$, this proves the result.
\end{proof}

\begin{Lem}\label{lem:notU3p}
  We have	$N \not\cong\PSU_3(p^c)$.
\end{Lem}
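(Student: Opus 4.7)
The plan is to proceed analogously to \cref{lem:notPSL2p} by exhibiting, in each of two cases, an explicit conjugate of $a$ whose product with $a$ fails to be a $p$-element. As before, \cref{lem:not field etc} places $G$ inside $\mathrm{Inndiag}(N)$, and since $|\mathrm{Inndiag}(N):N| = \gcd(3,q+1)$ with $q = p^c$ is coprime to $p$ (because $q+1 \equiv 1 \pmod 3$ when $p=3$, and trivially when $p \ne 3$), \cref{lem:lem5} forces $G = N = \PSU_3(q)$.

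I then work in $\widehat N = \SU_3(q)$ with respect to the Hermitian form $J = \left(\begin{smallmatrix}0&0&1\\0&1&0\\1&0&0\end{smallmatrix}\right)$, noting that every element of order $p$ in $\widehat N$ is conjugate into the Sylow $p$-subgroup $U$ consisting of matrices of the shape $u(\alpha,\beta) = \left(\begin{smallmatrix}1&\alpha&\beta\\0&1&-\bar\alpha\\0&0&1\end{smallmatrix}\right)$, subject to $\beta+\bar\beta = -\alpha\bar\alpha$. Let $\widehat a = u(\alpha,\beta)$ be a preimage of $a$ and put $w = \left(\begin{smallmatrix}0&0&1\\0&-1&0\\1&0&0\end{smallmatrix}\right) \in \widehat N$; a short computation gives $\widehat a^w = \left(\begin{smallmatrix}1&0&0\\\bar\alpha&1&0\\\beta&-\alpha&1\end{smallmatrix}\right)$. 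The pivotal observation is that any $p$-element of $\widehat N$ has characteristic polynomial $(X-\zeta)^3$ for some cube root of unity $\zeta \in \GF(q^2)$, so in particular its trace lies in the at-most-three-element set $\{3\zeta : \zeta^3 = 1\}$.

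I split into two cases. When $\alpha = 0$, expanding directly shows that $\widehat a\widehat a^w$ has characteristic polynomial $(X-1)(X^2 - (2+\beta^2)X + 1)$; since $1$ is already an eigenvalue, for all three eigenvalues to coincide the quadratic factor would have to equal $(X-1)^2$, forcing $\beta = 0$, against $a \ne 1$. Hence $\widehat a\widehat a^w$ is not a $p$-element, a contradiction. When $\alpha \ne 0$, I conjugate instead by $wt$ with $t = \mathrm{diag}(s,s^{q-1},s^{-q})$ ranging over a split torus of $\widehat N$, obtaining
\[ \operatorname{tr}\bigl(\widehat a\cdot(\widehat a^w)^t\bigr) = 3 + \alpha\bar\alpha\bigl(s^{2-q}+s^{2q-1}\bigr) + s^{q+1}\beta^2. \]
This trace is non-constant in $s$ (comparing $s = 1$ and $s = -1$ yields a discrepancy of $4\alpha\bar\alpha \ne 0$), and the preimage of each ``bad'' trace value is cut out by a Laurent polynomial equation of degree bounded in terms of $q$. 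For $q$ sufficiently large, the bad set is therefore a proper subset of $\GF(q^2)^\times$, so a good $s$ exists and produces a product which is not a $p$-element. The main obstacles are the fibre-size bookkeeping in the non-central case, together with the small-$q$ instances (in particular $\PSU_3(3)$), which will be dispatched via \cref{prop:sporadics}.
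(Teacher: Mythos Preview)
Your approach differs from the paper's. The paper observes that $\PGU_3(q)$ has exactly two nontrivial unipotent classes: the class of Jordan type $(2,1)$ meets a subgroup $\SU_2(q)\cong\SL_2(q)$, and the regular class meets $\SO_3(q)\cong\PSL_2(q)$. It then simply invokes \cref{lem:notPSL2p} inside the relevant subgroup, which settles every $q>3$ at once and leaves only $\PSU_3(3)\cong\G_2(2)'$ for \cref{prop:sporadics}. This is much shorter and avoids any case split on the shape of $\widehat a$.

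Your direct computation is correct in the $\alpha=0$ case, and the trace formula in the $\alpha\ne 0$ case checks out. The problem is the endgame. Clearing denominators, the condition $\operatorname{tr}=3\zeta$ becomes
\[
\alpha\bar\alpha\,s^{3(q-1)}+\beta^{2}s^{\,2q-1}-(3\zeta-3)s^{\,q-2}+\alpha\bar\alpha=0,
\]
a polynomial of degree $3(q-1)$ in $s$. With up to three admissible values of $\zeta$ this excludes at most $9(q-1)$ of the $q^{2}-1$ elements of $\GF(q^{2})^{\times}$, and $9(q-1)<q^{2}-1$ only for $q\ge 9$. When $\gcd(3,q+1)=1$ there is a single bad trace and the bound improves to $3(q-1)<q^{2}-1$, which already holds for all odd $q\ge 3$; so $q=3$ and $q=7$ are in fact covered by your counting. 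But for $q=5$ one has $\gcd(3,6)=3$, the counting reads $36>24$, and the argument does not close. Since $\PSU_3(5)$ appears nowhere in \cref{prop:sporadics} (it is neither in \cref{lem:solvpara} nor in parts (ii)--(iii)), you cannot offload that case as you propose. You would need an ad hoc verification for $\PSU_3(5)$, or a sharper argument using the full characteristic polynomial rather than only the trace. The paper's structural reduction to \cref{lem:notPSL2p} sidesteps all of this.
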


\begin{proof}
	Suppose that $N \cong \PSU_3(p^c)$ and consider $N$ as a subgroup of $K \cong \mathrm{PGU}_3(p^c)$. Then $K$ has exactly two conjugacy classes of elements of order $p$, one of which is conjugate into a subgroup of $\mathrm{SU}_2(p^c)\cong \SL_2(p^c)$ and the other into $\mathrm{SO}_3(p^c) \cong \PSL_2(p^c)$. Hence \cref{lem:notPSL2p} implies that $p^c=p=3$, and this case is handled in \cref{prop:sporadics} as $\PSU_3(3)\cong \G_2(2)'$.
 \end{proof}

\begin{Lem} \label{lem:notReesmall} We have
	$N \not\cong{}^2\mathrm G_2(3^c)$ with $c \ge 3$.
\end{Lem}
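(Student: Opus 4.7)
The plan is to derive a contradiction by reducing to the $\PSL_2$ case already handled in \cref{lem:notPSL2p}. Since $\mathrm{Inndiag}(\Ree(q)) = \Ree(q)$, \cref{lem:not field etc} forces $G \le \mathrm{Inndiag}(N) = N$ and hence $G = N$, while \cref{lem:notcross} forces $p$ to equal the defining characteristic, so $p = 3$ and $q := 3^c \ge 27$. Every element of $A$ is therefore a unipotent element of order $3$.

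The key structural ingredient is the involution centralizer. As $|G|$ is even, there is an involution $t \in G$; its centralizer takes the form $C_G(t) = \gen{t} \times L$ with $L \cong \PSL_2(q)$, and $L$ has Sylow $3$-subgroup of order $q \ge 27$, so $L$ contains elements of order $3$. By Ward's description of the conjugacy classes of $\Ree(q)$ in \cite{Ward}, $G$ has a unique conjugacy class of elements of order exactly $3$ (the long-root class). Combined with \cref{lem:lem2}, which identifies $A$ as a single conjugacy class of order-$3$ elements, this forces $A$ to coincide with that unique class, so every order-$3$ element of $L$ lies in $A$. In particular $A \cap L$ is precisely the unique $L$-conjugacy class of order-$3$ elements of $L$.

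To close the argument I apply the matrix calculation from the proof of \cref{lem:notPSL2p} inside $L \cong \PSL_2(3^c)$: since $3^c \ge 27 > 3$, the field $\GF(3^c)$ contains $\lambda \ne 0, \pm 1$, and that calculation produces two conjugate order-$3$ elements $a, b \in L$ whose product $ab$ is not a $3$-element. Since $a, b \in A \cap L \subseteq A$, the element $ab$ lies in $A^2$ yet is not a $3$-element, contradicting the hypothesis. The main obstacle I anticipate is the appeal to Ward's classification to confirm that $\Ree(q)$ has a single conjugacy class of elements of order exactly $3$; granting this, the rest of the argument is a direct reduction to the $\PSL_2(3^c)$ computation already carried out.
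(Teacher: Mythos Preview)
Your argument has a genuine gap at exactly the point you flagged as the obstacle: the claim that ${}^2\mathrm{G}_2(q)$ has a unique conjugacy class of elements of order~$3$ is false. By Ward \cite{Ward} there are three such classes, usually denoted $X$, $T$ and $T^{-1}$, with centralizers of order $q^3$ (a $3$-group) and $2q^2$ respectively. (The paper itself uses this dichotomy in \cref{lem:crossRank1}: the centralizer of a $3$-element is either a $3$-group or a $\{2,3\}$-group.) So \cref{lem:lem2} only tells you that $A$ is one of these three classes.

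This matters because your reduction to $L\cong\PSL_2(q)$ inside $C_G(t)$ cannot see the class $X$. Any order-$3$ element lying in $L$ is centralized by the involution $t$, hence has centralizer of even order; but elements of the long-root class $X$ have centralizer of order $q^3$, which is odd. Thus $A\cap L=\emptyset$ when $A=X$, and the $\PSL_2$ calculation never gets off the ground in that case. For $A=T$ or $A=T^{-1}$ your approach does work, since $a$ and $a^c$ lie in the same $L$-class and hence in the same $G$-class, but the case $A=X$ remains open.

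The paper sidesteps this by using a different subgroup that meets every unipotent class: the subfield subgroup $H\cong{}^2\mathrm{G}_2(3)\cong\SL_2(8){:}3$. Since every class of $3$-elements of $N$ intersects $H$ \cite[Table~22.2.7]{LiebeckSeitz}, one has $A\cap H\ne\emptyset$ regardless of which class $A$ is, and then \cref{lem:basic1} forces $\gen{A\cap H}$ to be a non-trivial soluble normal subgroup of the almost simple group $H$, a contradiction. You could repair your argument by supplementing it with a separate treatment of the long-root class, but the subfield-subgroup route is cleaner because it handles all three classes at once.
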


\begin{proof}
	If $N \cong {}^2\mathrm G_2(3^c)$ with $c\ge 3$ and odd, then $N$ has a subgroup $H\cong {}^2\mathrm G_2(3)\cong \SL_2(8){:}3$ and every conjugacy class of 3-elements in $N$ intersects $H$ non-trivially by \cite[Table 22.2.7]{LiebeckSeitz}. Hence $\gen{A \cap H} $ is a non-trivial soluble normal subgroup of $H$, which is a contradiction.
\end{proof}

\begin{Lem}\label{lem:char p generic}
  The group	$N$ is not a simple Lie type group defined in characteristic $p$ of rank at least $2$.
\end{Lem}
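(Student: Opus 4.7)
The plan is to combine \cref{lem:not field etc} with the Borel--Tits theorem to produce an element of $A$ inside a proper parabolic subgroup of $G$, which then contradicts \cref{lem:notpara}. The key observation that makes this work is that in defining characteristic $p$, diagonal automorphisms of $N$ have $p'$-order, so any $p$-element of $\mathrm{Inndiag}(N)$ is forced to lie in $N$ itself.

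First I would apply \cref{lem:not field etc} to obtain $G \le \mathrm{Inndiag}(N)$. Inspecting the diagonal automorphism orders listed in \cite[Theorem~2.5.12]{GLS3}, every quotient $\mathrm{Inndiag}(N)/N$ arising for a simple group $N$ of Lie type in characteristic $p$ (orders such as $\gcd(n+1,q\mp 1)$, $\gcd(2,q-1)$, $\gcd(4,q^n\mp 1)$ with $q=p^a$) is a $p'$-group. Equivalently, the centre of the simply connected cover of a simple algebraic group in characteristic $p$ is always a $p'$-group. Consequently $a \in N$, so $A \subseteq N$, and then \cref{lem:lem5} forces $G = \gen{a} N = N$.

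Now $a$ is an element of order $p$ in the Lie type group $G = N$ defined in characteristic $p$, hence $a$ is unipotent. By the Borel--Tits theorem (or simply by Sylow's theorem applied to the Borel subgroups of $G$), $a$ lies in the unipotent radical of some Borel subgroup $B$ of $G$. Since the Lie rank of $N$ is at least $2$, the Borel subgroup $B$ is a proper parabolic subgroup of $G$, and so $a \in A \cap B \neq \emptyset$, directly contradicting \cref{lem:notpara}. All of the genuine work has been carried out in the earlier lemmas; the only technical point requiring verification is the $p'$-ness of $\mathrm{Inndiag}(N)/N$, and this is a uniform, case-free feature of Lie type groups in defining characteristic, making the remainder of the argument essentially mechanical.
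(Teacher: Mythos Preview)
Your proposal is correct and follows essentially the same route as the paper: once $G \le \mathrm{Inndiag}(N)$, place an element of $A$ inside a Borel subgroup and invoke \cref{lem:notpara}. The paper is marginally more direct: since $\mathrm{Inndiag}(N)$ is itself a finite group of Lie type in characteristic $p$, a Borel subgroup $B$ of $G$ already contains a Sylow $p$-subgroup of $G$, so $A \cap B \neq \emptyset$ immediately, without first reducing to $G = N$. Your detour through the $p'$-ness of $\mathrm{Inndiag}(N)/N$ is correct but not needed. (One minor remark: the rank $\ge 2$ hypothesis is not what guarantees that $B$ is proper---a Borel subgroup of a non-soluble group is always proper---it is present only because the rank~$1$ defining-characteristic cases are handled separately in \cref{lem:notPSL2p,lem:notU3p,lem:notReesmall}.)
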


\begin{proof}
  By \cref{lem:not field etc}, we can identify $G$ with a subgroup of $\mathrm{Inndiag}(G)$. Let $B$ be a Borel subgroup of $G$. Since $B$ contains a Sylow $p$-subgroup of $G$, $A\cap B\not=\emptyset$. The result now follows from \cref{lem:notpara}.
\end{proof}

\section{Alternating groups, sporadic simple groups and computer calculations}	\label{sec:computer}

To complete our analysis of a minimum counterexample to \cref{thm:main} we need to investigate the case when $N$ is an alternating group or a sporadic simple group. We also need to eliminate computationally the small number of groups that have been excluded in the analysis in \cref{sec:Lie type}.

\begin{Lem}\label{lem:notalt}
  We have	$N$ is not an alternating group.
\end{Lem}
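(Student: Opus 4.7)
The plan is to rule out $N \cong \Alt(n)$ by a chain of subgroup reductions culminating in an explicit cycle computation. First I would note that $\mathrm{Out}(\Alt(n))$ is a $2$-group while \cref{lem:lem5} gives $G/N$ of order dividing the odd prime $p$, whence $G = N = \Alt(n)$; hence, fixing $a \in A$, the element $a$ has cycle type $(p^k, 1^{n-kp})$ for some $k \geq 1$.

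The two main reductions both apply \cref{lem:basic1} to a subgroup of $G$ which contains $a$. If $a$ has a fixed point $x$ and $n \geq 6$, then $a \in G_x \cong \Alt(n-1)$, which is a non-abelian simple group, so $\gen{A \cap G_x}$ must be trivial by \cref{lem:basic1}, contradicting $a \in A \cap G_x$. This shows $a$ has no fixed points and $n = kp$. For $k \geq 2$, after relabelling so that one $p$-cycle of $a$ lies on $\{1, \ldots, p\}$, I would take $H = (\Sym(p) \times \Sym(n-p)) \cap \Alt(n)$, the set-stabilizer of $\{1, \ldots, p\}$. A direct analysis of the normal subgroup lattice of $H$ shows that the soluble radical of $H$ is trivial when $p \geq 5$ (using that both $\Alt(p)$ and $\Alt(n-p)$ are non-abelian simple) and equals $\Alt(3) \times \{1\}$ when $p = 3$ and $n - 3 \geq 5$; since $a$ acts non-trivially on both blocks of the partition, $a$ lies outside this soluble radical, contradicting that $\gen{A \cap H}$ is a soluble normal subgroup of $H$.

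The main obstacle is the residual case $k = 1$, that is, $n = p \geq 5$ with $a$ a single $p$-cycle, since here no proper subgroup reduction presents itself. My plan here is an explicit computation: taking $a = (1, 2, \ldots, p)$ and $g = (1, 2)(3, 4) \in \Alt(p)$, one verifies that $b := a^g$ lies in $A$ and is a $p$-cycle, while $ab$ fixes the points $1$ and $3$ and restricts to a single $(p-2)$-cycle on the remaining letters. Since $1 < p - 2 < p$, the product $ab$ has order $p-2$, which is not a power of $p$, and hence $ab$ is not a $p$-element, providing the required contradiction. The only cases then not covered by the above arguments are $\Alt(5)$ and $\Alt(6)$ with $p = 3$, and these are finished off by appealing to \cref{prop:sporadics}.
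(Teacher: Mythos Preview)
Your argument is correct. The overall architecture---reduce to $G=N$, use subgroup solubility via \cref{lem:basic1} to knock out $n>p$, then finish $n=p$ by an explicit product of two $p$-cycles with a fixed point---matches the paper's strategy, but the reductions you choose are organised differently.

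The paper does not separate into ``fixed point'' versus ``fixed-point-free'' cases. Instead, after noting $G=N$, it observes that for $n>p$ with $p\ge 5$ one may conjugate so that $a$ normalizes (and does not centralize) $H=\Alt(\{1,\dots,p\})$, and then invokes \cref{lem:3.5} to force $a\in C_G(E(H))=C_G(H)$, a contradiction. For $p=3$ and $n>6$ it does the same with $\Alt(\{1,\dots,6\})$. This is a one-step reduction in each case, whereas you first strip off fixed points via a point stabilizer $\Alt(n-1)$ and then, in the fixed-point-free case, analyse the soluble radical of the block stabilizer $(\Sym(p)\times\Sym(n-p))\cap\Alt(n)$. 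Both routes rest on the same principle (\cref{lem:basic1} forces $\gen{A\cap H}$ into the soluble radical of $H$), and both are sound; the paper's choice of $H$ just avoids the case split and the explicit radical computation.

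For $n=p$, the paper conjugates $a=(1,2,\dots,p)$ by the $3$-cycle $(3,1,4)$ and shows that $aa^b$ fixes the point $2$, so it is a non-trivial permutation on $p-1<p$ letters and hence not a $p$-element. Your choice $g=(1,2)(3,4)$ leading to a $(p-2)$-cycle is an equally valid (and arguably cleaner) endgame.
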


\begin{proof}
	Assume that $N$ is an alternating group $\Alt(n)$. For $n \le 6$, we check the result by elementary calculation (see also \cref{prop:sporadics}). Since $p$ is odd by \cref{lem:p odd}, $a\in A\subset N=G$. Suppose that $n > p$ and $p \ge 5$, then we may assume that $a$ normalizes and does not centralize the subgroup $\Alt(\{1, \dots, p\})$, but this contradicts \cref{lem:3.5}. Therefore, $p=3$ and, as $n> 6$, we may assume that $a$ normalizes, but does not centralize $Y=\Alt(\{1,\dots,6\})$ with $a=yz$, where $y \in Y$ and $z \in \Alt(\{7,\dots,n\})$. But then applying the result with $n=6$ gives a contradiction. Hence we have that $n=p>5$. Assume that $a=(1,2,3, \dots, p)$ and let $b=(3,1,4)$. Then $a^b= (4,2,1,3,\dots, p)$ and $aa^b$ fixes $2$. Thus, $aa^b$ is a non-trivial $p$-element of order dividing $(p-1)!$, a contradiction. Hence $N$ is not an alternating group.
\end{proof}

For the computations in the next result we have used \cite{Magma} and \cite{GAP4}.

\begin{Prop}\label{prop:sporadics}
	The subgroup $N$ cannot be any of the following groups:
	\begin{enumerate}
    \item[(i)] One of the groups listed in \cref{lem:solvpara}.
    \item[(ii)] $\PSL_2(2^2)$, $\PSL_2(2^3)$, $\PSL_2(3^3)$, $\PSU_3(2^3)$ with $p=3$ and $\Sz(2^5)$ with $p=5$.
    \item[(iii)] A sporadic simple group.
	\end{enumerate}
\end{Prop}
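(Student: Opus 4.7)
The plan is to reduce the proposition to a finite computer verification carried out in Magma and GAP. The earlier lemmas have already narrowed the search drastically: \cref{lem:lem5} forces $G = \gen{a} N$ with $[G:N]\in\{1,p\}$ and $p$ dividing $|N|$; \cref{lem:p odd} forces $p$ to be odd; \cref{lem:not field etc} restricts $G$ to lie inside $\mathrm{Inndiag}(N)$ when $N$ is of Lie type; and \cref{lem:lem2} tells us that in the minimum counterexample $A$ is a single conjugacy class. Consequently, for each candidate $N$ in the list there are only a handful of pairs $(G,C)$ to consider, where $C$ is a $G$-class of elements of order $p$, and in each case we must exhibit an element $g \in G$ with $aa^g$ not a $p$-element (for $a \in C$), or, equivalently, produce a class $D$ of non-$p$-elements with $D \cap C\cdot C \neq \emptyset$.

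For the groups in (i) and (ii), which admit matrix or permutation representations of reasonable degree, the check is carried out by constructing $G$ explicitly in Magma, enumerating class representatives of order-$p$ elements, and searching over double coset representatives of $C_G(a)\backslash G / C_G(a)$ (or over random conjugates) until a witness $g$ with $aa^g$ of composite order is found. For the sporadic groups in (iii), direct element computation is impractical for the largest cases, so one instead uses the standard formula for structure constants of the class algebra,
\[
c_{C,C,D^{-1}} \;=\; \frac{|C|^2}{|G|}\sum_{\chi\in\operatorname{Irr}(G)}\frac{\chi(a)^2\,\overline{\chi(d)}}{\chi(1)},
\]
with $a\in C$ and $d\in D$. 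As soon as $c_{C,C,D^{-1}}\neq 0$ for some class $D$ whose element orders are not powers of $p$, we conclude that $C\cdot C$ contains a non-$p$-element, and $N$ is eliminated. Since the character table of every sporadic group (including the Monster) is available in the GAP character table library, this test is fully effective and requires no construction of the group itself.

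The only real obstacle is computational bookkeeping rather than mathematical difficulty: in every case a witness is produced after very few attempts, but the larger Lie type groups in (i) --- such as $\Omega_8^+(3)$, $\PSp_6(3)$ and $\triality(3)$ --- together with the Baby Monster and the Monster in (iii), require some care in setting up the calculation and in choosing class functions to test against. A particular sanity check is to record which non-$p$-class $D$ first witnesses the failure for each $(G,C)$, so that the result can be independently reproduced. The GAP code documenting the verification is collected in \cref{sec:appendix}.
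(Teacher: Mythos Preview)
Your proposal is correct in substance and is close to what the paper does, but there is one methodological difference worth pointing out. You split the verification into two modes: direct element search (constructing $G$, iterating over conjugates or double cosets) for the Lie-type groups in (i) and (ii), and structure-constant computations from character tables only for the sporadics in (iii). The paper instead applies the class-multiplication-coefficient test uniformly to \emph{all} of the groups, including the Lie-type ones: it reads off, from the character table of each $G$, whether $C\cdot C$ meets any class of non-$p$-elements, via exactly the formula you wrote down. This avoids ever building the larger Lie-type groups as matrix or permutation groups; the only extra work required is to manufacture in {\sc Magma} the two character tables not already in the {\sc GAP} library, namely those of $\PSU_5(3).2$ and $\triality(3).3$, and import them. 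So your plan would certainly succeed, but the paper's uniform character-table approach is shorter and sidesteps the ``computational bookkeeping'' you flag for $\Omega_8^+(3)$, $\PSp_6(3)$, $\triality(3)$ and the large sporadics.
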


\begin{proof}
	For this, we use the {\sc GAP} function included in \cref{sec:appendix}. This function takes as input a character table, group or string representing the {\sc ATLAS} name of a group. The character tables for the groups listed in the proposition are stored in {\sc GAP} 4.12.0 except for \(\PSU_5(3).2\) and \(\triality(3).3\) whose character tables can be computed in {\sc Magma} (the latter group is \texttt{AutomorphismGroupSimpleGroup(<13,4,3>)}) and imported into GAP. The function checks whether the square of a conjugacy class of elements of odd prime order \(p\) contains any elements which are not \(p\)-elements and applying it to all of the groups mentioned in the statement we see that this always happens, completing the proof.
\end{proof}
\section{The proofs of the main theorems and corollaries}\label{sec:proofs}

We begin this section by completing the proof of \cref{thm:main}.

\begin{proof}[Proof of \cref{thm:main}]
  With the notation developed in \cref{sec:reduction}, \cref{lem:simp} implies $N$ is a non-abelian simple group. By \cref{lem:notcross,lem:char p generic}, $N$ is not a Lie type group while \cref{lem:notalt} and \cref{prop:sporadics} show that $N$ is not an alternating group or a sporadic simple group. Hence there are no counterexamples to \cref{thm:main} and the proof is complete.
\end{proof}

  We can now prove our main theorem.

 \begin{proof}[Proof of \cref{thm:A}]
  By \cref{thm:main}, $Q=\gen{A}$ is a soluble group and by hypothesis $O_p(G)=1$. Let $a \in A$ and let $X$ be the smallest subnormal subgroup of $Q$ which contains $a$. Then $X= \gen{a^{X}}$. Since \(X\) is subnormal in \(G\), \(O_p(X) \leq O_p(G) = 1\) and so \cref{lem:class generated} tells us that \(X\) is a Frobenius group with complement \(\gen{a}\).
  Because Frobenius kernels are nilpotent, $X= F(X)\gen{a}$ and so, as $X$ is subnormal in $Q$, $F(X)\le F(Q)$. Set $\ov G= G/F(Q)$. Then $\ov{X}= \ov{\gen{a}}$ is subnormal in $\ov{Q}$. Thus $\ov{a} \in O_p(\ov{Q})$. Since this is true for all $a\in A$, we deduce that $\ov{Q}={\gen{\ov A}}\le O_p(\ov Q) \le \ov Q$. Therefore, $\ov Q$ is a $p$-group.
  We now claim that $\ov Q= \gen{\ov A}$ is elementary abelian.
  If $\ov a\in \ov A$ normalizes but does not centralize some elementary abelian $p$-subgroup of $\ov Q$, then \cref{lem:is abundant} implies that there exists $\ov Y \le \ov Q$ with $\abs{\ov a^{\ov Q}\cap \ov Y}=p$ and $\ov Y \cap \ov Q'\ne 1$ which contradicts
  \cref{lem:not abundant}. Hence every $\ov a\in \ov A$ centralizes every elementary abelian subgroup which it normalizes. Now \cref{lem:generation abelian} implies $\ov Q$ is elementary abelian. This completes the proof.
 \end{proof}

 \begin{proof}[Proof of \cref{cor:cor11}]
  By \cref{thm:A}, $Q=\langle A\rangle$ has elementary abelian Sylow $p$-subgroups Since $A^2 \subset Q$, every non-trivial element of $A^2$ has order $p$. So suppose that $a\in A$ and $a^{-1} \in A$. Since $F(Q)$ is a $p'$ group, there exists $x \in F(Q)$ such that $a^x \ne a$ as $C_Q(F(Q))\le F(Q)$. Hence $1 \ne [a,x]=a^{-1}a^x\in A^2$. However $[a,x]\in F(Q)$ is a $p'$-element. Therefore every element of $A^2$ has order $p$.
 \end{proof}

  \begin{proof}[Proof of \cref{cor:cor3}]
    By \cref{thm:A}, $G=\langle A\rangle$ is soluble and $p$ is odd. Therefore \cref{lem:class generated} implies that $G$ is a Frobenius group with complement $\gen{a}$. Finally, \cref{cor:cor11} completes the proof.
 \end{proof}

\appendix
\section{\sc{GAP} code} \label{sec:appendix}
This appendix contains the {\sc GAP} function used in \cref{prop:sporadics}.
\begin{verbatim}
ClassSquareTest := function(G)
local ans, CC, CT, Cx, i, n, ord, p, tmp;
if IsCharacterTable(G) then CT := G;
  else CT := CharacterTable(G);
fi;
n := NrConjugacyClasses(CT);
ord := OrdersClassRepresentatives(CT);
CC := Filtered([1..n],x->IsPrime(ord[x]) and ord[x]<>2);
for i in CC do
  p := PrimeDivisors(ord[i])[1];
  Cx := Filtered([1..n], x ->
    PrimeDivisors(ord[x]) <> [p]);
  tmp := Filtered(Cx, y ->
    ClassMultiplicationCoefficient(CT, i, i, y) = 0);
  if tmp = Cx then
    Add(ans, tmp);
  fi;
od;
if ans = [] then return true;
else Print(ans,"\n"); return false;
fi;
end;
\end{verbatim}
	%

\printbibliography

\end{document}